%% file: main.tex
\documentclass[11pt]{article}
\usepackage[margin=1.05in]{geometry}

\usepackage[english]{babel}
\usepackage{mathtools,amssymb,amsthm,mathrsfs}
\usepackage[hidelinks]{hyperref}

\title{\LARGE A Metric with Positive Sectional Curvature on $S^2\times S^3$}
\author{
  Shengtao Guo \qquad
  Ethan X. Fang \qquad
  Junwei Lu\thanks{Department of Biostatistics, Harvard T.H. Chan School of
	Public Health. Email: \texttt{junweilu@hsph.harvard.edu}.}
}
\date{}

\newtheorem{theorem}{Theorem}[section]
\newtheorem{proposition}[theorem]{Proposition}
\newtheorem{lemma}[theorem]{Lemma}

\theoremstyle{definition}

\theoremstyle{remark}

\newcommand{\R}{\mathbb R}
\newcommand{\Z}{\mathbb Z}
\newcommand{\cM}{\mathcal M}
\newcommand{\cZ}{\mathcal Z}
\newcommand{\tZ}{\widetilde{\mathcal Z}}
\newcommand{\Gr}{\operatorname{Gr}}
\newcommand{\Rm}{\operatorname{Rm}}
\newcommand{\II}{\operatorname{II}}
\newcommand{\Span}{\operatorname{span}}
\newcommand{\dist}{\operatorname{dist}}
\newcommand{\Id}{\operatorname{Id}}
\newcommand{\Lie}{\mathcal L}
\newcommand{\dd}{\mathrm d}
\newcommand{\eps}{\varepsilon}

\allowdisplaybreaks[2]

\begin{document}

\maketitle

\begin{abstract}
We prove that $S^2\times S^3$ admits a Riemannian metric with positive
sectional curvature.  We view it as a principal circle bundle over
$S^2\times S^2$.  A diagonal Cheeger deformation of the base and a connection
whose curvature form vanishes on the remaining flat tori yield a
nonnegatively curved connection metric whose zero-curvature planes are the
horizontal lifts of the tangent planes to those tori.  We then perturb this
metric by the real part of a global complex-valued symmetric $2$-tensor.
Differentiation along the circle fibers produces a trace-free first variation
of the second fundamental form on local horizontal lifts of the flat tori.
The Gauss equation converts this into a positive second-order curvature term
that dominates as the fibers shrink.  A quantitative lower bound for the
Hessian in directions normal to the set of zero-curvature planes extends
this positivity to nearby planes. The metric and the proof are discovered by the Odin Automatic AI Research Agent.
\end{abstract}

\input{sections/introduction}
\input{sections/preliminaries}
\input{sections/construction}
\input{sections/cheeger}
\input{sections/connection}
\input{sections/perturbation-tensor}
\input{sections/perturbation}
\appendix
\input{sections/technical-appendix}
\input{sections/topology}

\bibliographystyle{alpha}
\bibliography{references}

\end{document}

%% file: sections/introduction.tex
\section{Introduction}

The construction of metrics with positive sectional curvature is one of the
central existence problems in Riemannian geometry.  In contrast with the
flexibility of positive Ricci or scalar curvature, only a small collection of
general mechanisms for producing positive sectional curvature is known; see,
for example, the surveys of Grove~\cite{Grove2009} and
Ziller~\cite{Ziller2007}.  Products of spheres have long served as basic test
cases.  Their product metrics have nonnegative curvature and many flat mixed
planes, but removing all of those planes by a global metric deformation is a
delicate problem.

Berger~\cite{Berger1966} proved that if a metric variation of a compact Riemannian product has
nonnegative first variation of sectional curvature on every mixed plane,
then that first variation vanishes on every mixed plane.
Bourguignon, Deschamps, and Sentenac~\cite{BDS1972} developed higher-order variational
extensions of this result, and
Bourguignon~\cite{Bourguignon1975} subsequently applied related ideas to
Hopf's conjecture on Riemannian products.  Cheeger~\cite{Cheeger1973} introduced the deformation
by group actions that now bears his name.  M\"uter~\cite{Muter1987} gave the systematic analysis of these
deformations used below.  Ziller~\cite{Ziller2009} summarized M\"uter's results, including the
description of the planes that remain flat and a reparametrized curvature
formula with a nonnegative remainder.  For the
diagonal $\mathrm{SO}(3)$-action on $S^2\times S^2$, the resulting
Cheeger-deformed metrics are nonnegatively curved, and their zero-curvature
planes are tangent to a family of totally geodesic flat tori; see
\cite{Muter1987} and \cite[Proposition~2.3]{Bettiol2014}.
Strake's analysis of first variations~\cite{Strake1987} explains an obstruction
created by such tori: no metric variation can increase to first order the
sectional curvature of every plane tangent to them.  Related obstructions to
first-order improvement were established by Spatzier and Strake
\cite[Section~5]{SpatzierStrake1990}.  Bettiol nevertheless used a local
conformal variation to construct metrics with the following property: at
every point, the average sectional curvature of any two $2$-planes in the
same tangent space separated by at least a prescribed Grassmannian distance
is positive.  The prescribed distance may be chosen arbitrarily small.  In
particular, these metrics have positive
biorthogonal curvature~\cite{Bettiol2014}.

There was also substantial progress on $S^2\times S^3$ under weaker
positivity conditions.  Wilking constructed an almost positively curved
metric on $\mathbb{RP}^2\times\mathbb{RP}^3$; its pullback gives such a
metric on $S^2\times S^3$~\cite[Corollary~3 and
Proposition~6]{Wilking2002}.  Adapting Bettiol's first-order conformal
deformation to Wilking's metric, Stupovski and Torres obtained metrics on
$S^2\times S^3$ for which the average sectional curvature of any two planes
separated by at least a prescribed Grassmannian distance is positive.  In particular,
their metrics have positive biorthogonal
curvature~\cite[Theorem~A]{StupovskiTorres2020}.
These averaged curvature conditions are weaker than positive sectional
curvature and therefore do not imply the theorem below.

In a recent preprint, Brendle and Hung~\cite{BrendleHung2026} prove that
$S^2\times S^2$ itself admits a metric with positive sectional curvature.
They start from the background metric obtained by the same diagonal Cheeger
deformation and construct a
third-order deformation.  Their argument includes an analysis of
the minimum of sectional curvature near the set of zero-curvature planes
\cite[Theorems~2.11 and~2.12]{BrendleHung2026} and a quantitative lower bound
for the sectional curvature of the Cheeger-deformed metric
\cite[Proposition~B.1 and Corollary~B.2]{BrendleHung2026}.  Motivated by their
work, we prove the following theorem.

\begin{theorem}                                                                    \label{thm:main}
The manifold $S^2\times S^3$ admits a Riemannian metric with
strictly positive sectional curvature.
\end{theorem}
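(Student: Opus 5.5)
We follow the strategy outlined in the abstract. We view $S^2\times S^3$ as the total space $P$ of a principal circle bundle $\pi\colon P\to S^2\times S^2$. We use the bundle with Euler class $(1,1)$, or more generally $(a,b)$ with $\gcd(a,b)=1$; its total space is diffeomorphic to $S^2\times S^3$ by the classification of simply connected $5$-manifolds, since it is spin and has $H_2=\Z$. On the base $S^2\times S^2$ we take the diagonal $\mathrm{SO}(3)$ Cheeger deformation $g_t$ of the product metric. By M\"uter's analysis, $g_t$ has nonnegative curvature, and its zero-curvature planes are exactly the tangent planes to a family of totally geodesic flat tori $T$. We then choose a connection $1$-form $\theta$ whose curvature $F=\dd\theta$ is an $\mathrm{SO}(3)$-invariant closed $2$-form in the class $(a,b)$ that vanishes on every tangent plane of these tori. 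This is arranged by an invariant combination of the two area forms plus an exact correction, and we check in coordinates adapted to the tori that it vanishes there. The connection metric is $h_\eps=\pi^*g_t+\eps^2\theta^2$. O'Neill's formulas give $\sec(X\wedge Y)=\sec_{g_t}-\tfrac34\eps^2|F(X,Y)|^2$ on horizontal planes, while the vertizontal and mixed terms involve $|\iota_X F|^2$ and $\nabla F$. Choosing $\eps$ small and using the vanishing of $F$ on $T$ (so that $\nabla F$ terms are controlled quadratically), we show that $h_\eps$ has $\sec\ge0$. Its zero set $\cZ$ consists exactly of horizontal lifts of tangent planes to the tori; vertizontal planes have curvature $\tfrac{\eps^2}{4}|\iota_XF|^2>0$ wherever $\iota_XF\ne0$, and we ensure $F$ is nondegenerate enough for this.

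Next we perturb: $h_{\eps,s}=h_\eps+s\,\mathrm{Re}(e^{ik\phi}\,\pi^*\beta)$, where $\phi$ is a local fiber coordinate, $k\neq0$, and $\beta$ is a complex symmetric $2$-tensor on the base, globalized as a section of $\mathrm{Sym}^2T^*\otimes L^k$. The point is that the first variation of curvature on $\cZ$ is obstructed by Strake's theorem only for perturbations that are basic. Our perturbation instead oscillates along the fibers. The local horizontal lifts $\tilde T$ of the flat tori are totally geodesic for $h_\eps$, and the perturbation changes their second fundamental form by a term proportional to $\partial_\phi$ of the tensor, namely $\II'=k\,\mathrm{Im}(e^{ik\phi}\beta)|_{\tilde T}$ in normal directions. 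We choose $\beta$ so that this is trace-free but nonzero on each torus plane; the trace-free condition forces the first-order curvature change to vanish. Then, via the Gauss equation $\sec=\sec_{\tilde T}+\langle\II(X,X),\II(Y,Y)\rangle-|\II(X,Y)|^2$, the second-order term is $-\det$ of a trace-free symmetric form, which equals $+|\II'|^2$-type positive. Because of the factor $k/\eps$ coming from differentiation in the unit fiber direction, this term is of order $s^2/\eps^2$, which dominates the other $O(s^2)$ contributions as $\eps\to0$. We also must check that the intrinsic curvature of $\tilde T$ and the ambient curvature term do not produce competing negative $s^2$ terms; we handle this by using the freedom in $\beta$ and by shrinking $\eps$.

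Finally we globalize. Near $\cZ$ in the Grassmannian bundle we need a Hessian lower bound $\sec_{h_\eps}(\sigma)\ge c\,\dist(\sigma,\cZ)^2$ with $c$ uniform in $\eps$ after rescaling, in the spirit of Brendle--Hung's Proposition~B.1. Combined with $|\partial_s\sec|\le C\,\dist(\sigma,\cZ)$, which holds because the first variation vanishes on $\cZ$, and the positive second-order term on $\cZ$, a standard quadratic-form argument yields $\sec>0$ in a neighborhood of $\cZ$ for small $s$. Away from that neighborhood, compactness gives $\sec_{h_\eps}\ge\delta>0$, which survives small $s$. The main obstacle is this uniform Hessian estimate, which must hold in the normal directions to $\cZ$ with constants compatible with the $\eps$-scaling of the perturbation, together with the mixed first-order cross terms. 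We would attack it by first proving a Brendle--Hung-type quadratic lower bound for the Cheeger metric on the base, then tracking through O'Neill's formulas how vertical components of planes enter at order $\eps^2|\iota_XF|^2$, and choosing $s=\eps^{m}$ for a suitable $m$ to balance the terms.
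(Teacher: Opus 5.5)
Your outline has the paper's architecture, but the quantitative steps that carry the proof are either misdiagnosed or left open.

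\textbf{Nonnegativity of the connection metric.} Nonnegativity of $\pi^*g_t+\eps^2\theta^2$ does not come from shrinking $\eps$. For $g_t$-orthonormal horizontal $X,Y$ and unit vertical $U$, the plane $\Span\{X,\cos\varphi\,Y+\sin\varphi\,U\}$ has curvature
\[
 \cos^2\varphi\bigl(\sec_{g_t}(X,Y)-\tfrac34\eps^2F(X,Y)^2\bigr)
 -\eps\sin\varphi\cos\varphi\,(\nabla_XF)(X,Y)
 +\tfrac{\eps^2}{4}\sin^2\varphi\,|\iota_XF|^2 .
\]
This is nonnegative for all $\varphi$ if and only if $(\nabla_XF)(X,Y)^2\le|\iota_XF|^2\bigl(\sec_{g_t}(X,Y)-\tfrac34\eps^2F(X,Y)^2\bigr)$. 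The fiber length cancels except in the O'Neill term.

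Both sides vanish quadratically at $\cZ$, so quadratic vanishing of $\nabla F$ proves nothing by itself. You need a comparison of constants, and for small $t$ the base curvature of mixed planes is only known to be at least $ct^3\dist(\Pi,\cZ)^2$.

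Note also that $\omega_1+\omega_2$ already vanishes on every mixed plane, so vanishing on the tori is not what the exact correction is for. The paper chooses $\alpha_{(p_1,p_2)}(x_1,x_2)=\langle p_1\times p_2,x_2-x_1\rangle$ so that $\partial_t(\nabla^{g_t}\Omega_t)|_{t=0}$ vanishes on mixed planes (Lemma~\ref{lem:first-order-cancellation}). This cancels the nonzero $t$-linear part that $\nabla^{g_t}\tfrac12(\omega_1+\omega_2)$ has there. It gives $\mathfrak a_t^2\le C(t^2\dist(\Pi,\cM)^2+t^4\dist(\Pi,\cZ)^2)$ for $\mathfrak a_t=(\nabla^{g_t}_X\Omega_t)(X,Y)$. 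That bound is dominated by $K_t(\Pi)\ge c(\dist(\Pi,\cM)^2+t^3\dist(\Pi,\cZ)^2)$ once $t$ is small. This strict inequality is also what pins the zero set down to exactly $\tZ$.

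\textbf{The first variation under the perturbation.} Trace-freeness is not what kills the first variation. Since $\II_0=0$, the $\II$-terms in the Gauss equation are $O(s^2)$ whatever their trace. So the first variation of $\sec$ on $T_p\Sigma_b$ equals that of the intrinsic curvature of the lifted torus. That is a second-order differential expression in $h|_{\Sigma_b}$ (divergence of divergence minus Laplacian of trace), and a pointwise trace condition does not make it vanish.

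Fiber oscillation does not change this first-order picture; compare Strake's lemma, which applies to every variation. It only supplies the second-order gain, and what trace-freeness buys is the positivity $-\det\dot\II^U>0$ of that gain.

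You must keep the induced metrics on the lifts flat. The paper does this by making $\mathcal Q_t|_{\Sigma_b}$ parallel (Proposition~\ref{prop:tensor-on-zero-curvature-planes}). Producing a global section that is nonzero and trace-free on every $\Pi_b$, parallel along every lift, and smooth at $p_1=\pm p_2$ is a genuine construction; the paper uses $\zeta_1\odot\zeta_2$ corrected by $\mathscr K$ and $\mathscr J_t$.

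\textbf{The obstacle you left open.} The same parallelism resolves it. No Hessian bound uniform in $\eps$ holds for an $\eps$-independent distance. Tilting toward the fiber costs only $\tfrac{\eps^2}{4}|\iota_XF|^2\sin^2\varphi$, so in fixed norms $H_\eps$ is comparable to $|\xi|^2+\eps^4|\vartheta|^2$. Consequently $\tfrac12\langle H_\eps^{-1}r_\eps,r_\eps\rangle$ is comparable to $|r_{\eps,\mathrm h}|^2+\eps^{-4}|r_{\eps,\mathrm v}|^2$.

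Vanishing of the first variation on $\tZ$ only gives $|\partial_s\sec|\le C_\eps\dist(\cdot,\tZ)$ with an $\eps$-dependent constant. To stay below the gain $\kappa\eps^{-2}$ you need $|r_{\eps,\mathrm v}|\le C\eps^2$ (Lemma~\ref{lem:first-variation-estimate}). The paper gets this by differentiating the Codazzi equation of $\Sigma_b$: because $\dot\II^U$, of size $\eps^{-1}$, is parallel, $|\dot\Rm(X,Y,Z,U)|\le C\eps$.

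For a merely trace-free tensor this component is generically of size $\eps^{-1}$. The correction is then of size $\eps^{-4}$, and the reduced coefficient turns negative. Coupling $s=\eps^m$ cannot help, since $a_\eps-\tfrac12\langle H_\eps^{-1}r_\eps,r_\eps\rangle$ does not involve $s$.
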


We realize $S^2\times S^3$ as the principal $S^1$-bundle with primitive
first Chern class $(1,1)$ over $S^2\times S^2$.  The metric is constructed in
three stages.  First, we make a small diagonal Cheeger deformation of the
product metric on the base.  Its zero-curvature planes form a smooth compact
embedded four-dimensional submanifold of the Grassmann bundle.  We prove a quantitative
lower bound with explicit dependence on the Cheeger parameter, including
at points $(p_1,p_2)\in S^2\times S^2$ where $p_1=\pm p_2$.

Second, we choose an explicit representative of the Chern class.  If
$\omega_1$ and $\omega_2$ are the standard area forms, this representative
has the form $\Omega_t=\tfrac12(\omega_1+\omega_2+t\,\dd\alpha)$, where
$\alpha$ is an explicit one-form invariant under the diagonal
$\mathrm{SO}(3)$-action.  The form $\Omega_t$ is symplectic and vanishes on
every torus in this family.  The
corresponding connection metric on the circle bundle is nonnegatively curved,
and its zero-curvature planes are precisely the horizontal lifts of the base
zero-curvature planes.  Completing the square in the curvature formula for
the connection metric yields a condition involving the curvature form and
its covariant derivative.  For principal circle bundles, this condition
incorporates Weinstein's fatness criterion~\cite{Weinstein1980} and is a
special case of criteria proved by Chaves, Derdzinski, and
Rigas~\cite[main theorem, condition~(ii)]{ChavesDerdzinskiRigas1992} and by
Shankar, Tapp, and Tuschmann
\cite[Theorem~3.1 and Lemma~3.2]{ShankarTappTuschmann2005}.  Strict versions of
these inequalities are discussed in \cite[Section~6]{Ziller2007}.  Related
curvature inequalities for metrics on vector and sphere bundles were developed by Strake and
Walschap~\cite{StrakeWalschap1990} and Tapp~\cite{Tapp2003}.  We apply the
circle-bundle criterion when the base has nonnegative sectional curvature
and contains zero-curvature planes.  On mixed planes, the derivative at
$t=0$ of $\nabla^{g_t}\Omega_t$ vanishes.  This cancellation ensures that
equality can occur only on those tori.

Third, we perturb the connection metric by the real part of a globally
defined complex-valued symmetric $2$-tensor.  Under the principal action of
$e^{i\lambda}\in S^1$, this tensor is multiplied by $e^{2i\lambda}$.  On every
zero-curvature plane, the perturbation is nonzero and trace-free.  On each local
horizontal lift of one of the flat tori, the induced metric remains flat
under the perturbation.  Such a lifted surface is totally geodesic for the
background metric.  Differentiating the Gauss equation therefore shows that
the first variation of ambient sectional curvature vanishes on its tangent
planes, in accordance with Strake's obstruction to such a first-order
improvement~\cite[Lemma~4.1]{Strake1987}.  At second order, differentiation in
the fiber direction produces a large trace-free second fundamental form, and
the Gauss equation converts it into a positive curvature contribution.  When
the fibers are sufficiently short, this contribution dominates the errors
arising from nearby planes.

This last step is where our construction differs most from that of Brendle and
Hung~\cite{BrendleHung2026}.  They deform the four-dimensional base
through third order; their second-order lower bound remains degenerate along
a two-torus, and a third-order correction removes the remaining degeneracy
\cite[Propositions~5.2 and~6.1]{BrendleHung2026}.  We
instead keep the base nonnegative, pass to its circle bundle, and use the
additional vertical direction to obtain positivity already at second order
from a single linear metric perturbation.  The bundle argument requires two
additional ingredients.  The first is a global complex-valued symmetric
$2$-tensor satisfying the transformation law above and remaining smooth where
$p_1=\pm p_2$.  The second is an estimate for planes with a vertical component
that gives the precise $\eps$-dependence in the horizontal and vertical normal
directions.  Here $\eps$ measures the length of the circle fibers.
The curvature of planes containing the fiber direction is of order $\eps^2$.
The estimate shows that the negative correction produced by minimizing over
nearby planes remains bounded, while the positive term from
the Gauss equation grows like $\eps^{-2}$.

\vspace{3pt}

\noindent\textbf{The role of AI in this proof.}
Odin Automatic AI Research Agent was used to discover the Riemannian metric and the proof that it has positive sectional curvature.

\vspace{3pt}

\noindent\textbf{Paper organization.}
Section~\ref{sec:preliminaries} introduces the
Cheeger reparametrization and the notation for zero-curvature planes.  In
Section~\ref{sec:construction} we state the quantitative construction and
the estimates used to prove it, and then deduce Theorem~\ref{thm:main}.  The
proofs are
given in Sections~\ref{sec:cheeger}--\ref{sec:nearby-planes}: first the
curvature estimate near the zero-curvature planes of the Cheeger-deformed
metric, then the connection metric, the construction of the complex-valued
symmetric $2$-tensor, and the perturbation argument.
Appendix~\ref{app:tensor-derivatives} gives the required estimates for
covariant derivatives, Appendix~\ref{app:tubular-minimization} proves the minimization
lemma, and Appendix~\ref{sec:topology} identifies the total space of the
circle bundle.

%% file: sections/preliminaries.tex
\section{Preliminaries}                                               \label{sec:preliminaries}

Equip each two-sphere with its unit round metric and standard orientation.
The area form of the $i$th factor is denoted by $\omega_i$, also after
pullback to the product, and is normalized by
$\int_{S^2}\omega_i=4\pi$.  Set $B=S^2\times S^2$, let $\bar g$ be the
product of the unit round metrics, set $\mathfrak g=\mathfrak{so}(3)$, and identify
$\mathfrak g$ with $\R^3$ by $z\mapsto(x\mapsto z\times x)$.
We identify $H^2(B;\Z)$ with $\Z^2$ using the integral classes
$[\omega_1/(4\pi)]$ and $[\omega_2/(4\pi)]$.
The bi-invariant inner product used in the
Cheeger deformation is the Euclidean product
$Q_{\mathfrak g}(z,w)=\langle z,w\rangle$.  Let $\bar g_t$ be the Cheeger
deformation of $\bar g$ at parameter $t$ for the diagonal
$\mathrm{SO}(3)$-action, using precisely the parameter convention in
\cite[Proposition~1.3]{Ziller2009}, and put
$g_t=\tfrac12\bar g_t$.  Thus the
final factor $1/2$ is a constant rescaling after the Cheeger deformation; it
does not change the parameter $t$.  The rescaling is chosen to match the
normalization of the curvature form in Section~\ref{sec:connection}.  By
Cheeger's construction, $g_t$ has
nonnegative sectional curvature~\cite{Cheeger1973}.

Unless otherwise specified, $c,C>0$ denote uniform constants whose values
may change from line to line; subscripted constants are fixed when introduced.

For $b\in S^2$, let
$T_b=(S^2\cap b^\perp)\times(S^2\cap b^\perp)$.  At
$(p_1,p_2)\in T_b$, set
$E_b=(b\times p_1,0)$ and $F_b=(0,b\times p_2)$, and denote the mixed
plane they span by $\Pi_b=\Span\{E_b,F_b\}$.
We write $\cZ\subset\Gr_2(TB)$ for the set of planes $\Pi_b$.  Each $T_b$
is a component of the fixed-point set of the
diagonal reflection across $b^\perp$, hence is totally geodesic.  It is
also flat.  Indeed, $E_b+F_b$ is tangent to a diagonal orbit and
$F_b-E_b$ is orthogonal to the diagonal $\mathrm{SO}(3)$-orbit.  The Cheeger
deformation multiplies the
squared length of the former by $(1+2t)^{-1}$ and leaves that of the latter
unchanged.  Thus the metric has constant coefficients with respect to angular
coordinates on the two equators.  This is the family of flat tori identified by
M\"uter~\cite[Satz~4.26]{Muter1987}.

Let $\cM\subset\Gr_2(TB)$ be the submanifold of mixed planes, namely the
planes spanned by vectors $E=(e,0)$ and $F=(0,f)$ tangent to different
sphere factors.  For $t=0$, it is exactly the zero set of the
sectional curvature function.

Let $\mathscr F$ be the compact bundle of ordered $g_0$-orthonormal
two-frames, and let
$\rho(X,Y)=\Span\{X,Y\}$ be its projection to $\Gr_2(TB)$.
We fix the auxiliary metric on $\Gr_2(TB)$ induced by $g_0$ and use its
distance function throughout.  The symbol $\dist_{\cM}$ denotes the intrinsic
distance for the induced metric on $\cM$.  Let $C_t$ be the positive definite,
$g_0$-self-adjoint endomorphism characterized by
$g_t(X,Y)=g_0(C_t X,Y)$, and let
$\mathcal R_t:\Gr_2(TB)\to\Gr_2(TB)$ be the Cheeger
reparametrization $\mathcal R_t(\Pi)=C_t^{-1}\Pi$.
We use the sign convention
\[
 R^g(X,Y)Z=\nabla_X^g\nabla_Y^g Z-\nabla_Y^g\nabla_X^g Z
 -\nabla_{[X,Y]}^g Z,
 \;\text{and}\;
 \Rm_g(X,Y,Z,W)=g(R^g(X,Y)Z,W),
\]
so that the unit sphere has positive sectional curvature.
For $\Pi=\rho(X,Y)$, set
\[
 K_t(\Pi)=
 \frac{\Rm_{g_t}(C_t^{-1}X,C_t^{-1}Y,C_t^{-1}Y,C_t^{-1}X)}
 {|C_t^{-1}X\wedge C_t^{-1}Y|_{g_t}^2}.
\]
The value is unchanged by an orthogonal change of the frame, and
$K_t(\Pi)=\sec_{g_t}(\mathcal R_t(\Pi))$.  Thus $K_t$ and all distances below are evaluated at
$\Pi$, before applying $\mathcal R_t$.  For small $t$,
$|C_t^{-1}X\wedge C_t^{-1}Y|_{g_t}^2$ is uniformly comparable to one on
$\mathscr F$.

We write $\sec_g(\Pi)$ for the sectional curvature of a plane $\Pi$ with
respect to a metric $g$.

%% file: sections/construction.tex
\section{The metric construction}                                             \label{sec:construction}

Let $\pi:P\to B$ be the principal $S^1$-bundle with first Chern class
$(1,1)$.  The following theorem states the construction and the estimates used
in its proof.  Its four assertions are proved in the sections that follow.

\begin{theorem}                                                                     \label{thm:construction}
There exists $t_0>0$ such that, for every $t\in(0,t_0)$, there exist a two-form
$\Omega_t$ on $B$, a connection one-form $\theta_t$ on $P$, a complex-valued
symmetric $2$-tensor $\mathcal Q_t$ on $P$, and a number
$\eps_0(t)>0$ with the following properties for every
$0<\eps<\eps_0(t)$.

\begin{enumerate}
\item The form $\Omega_t$ is symplectic and satisfies
$[\Omega_t/(2\pi)]=(1,1)\in H^2(B;\Z)$, and
$\dd\theta_t=\pi^*\Omega_t$.  The connection metric
$G_{t,\eps}=\pi^*g_t+\eps^2\theta_t\otimes\theta_t$ has nonnegative
sectional curvature.  If $\Pi_p^{\mathrm H}\subset\ker\theta_{t,p}$ denotes the
$\theta_t$-horizontal lift at $p\in P$ of a plane
$\Pi\subset T_{\pi(p)}B$, then the zero-curvature planes of $G_{t,\eps}$ are
exactly $\tZ=\{\Pi_p^{\mathrm H}:p\in P,\ \Pi\in\cZ\cap\Gr_2(T_{\pi(p)}B)\}$.

\item Let $V$ be the fundamental vertical vector field normalized by
$\theta_t(V)=1$.  There is an
$\eps$-independent auxiliary metric on $\Gr_2(TP)$ and an orthogonal
splitting of the normal bundle,
$\nu(\tZ)=\nu^{\mathrm h}(\tZ)\oplus\nu^{\mathrm v}(\tZ)$, where
$\nu^{\mathrm h}(\tZ)$ consists of variations through horizontal planes.  If
$\widetilde\Pi\in\tZ$ is contained in $T_pP$, then
$\nu^{\mathrm v}_{\widetilde\Pi}(\tZ)
=\operatorname{Hom}(\widetilde\Pi,\R V_p)$ consists
of variations obtained by taking graphs over $\widetilde\Pi$.  The norms
on $\nu^{\mathrm h}(\tZ)$ and $\nu^{\mathrm v}(\tZ)$ are induced,
respectively, by the fixed auxiliary metric on $\Gr_2(TB)$ and by $G_{t,1}$;
in particular, they do not depend on $\eps$.
The submanifold $\tZ$ lies in the critical set of the sectional curvature
function of $G_{t,\eps}$.  Let $H_\eps$ denote the restriction of its Hessian
along $\tZ$ to the normal bundle $\nu(\tZ)$.  There exists a constant
$\mu_t>0$, independent of $\eps$, such that, for every
$\widetilde\Pi\in\tZ$, $\xi\in\nu^{\mathrm h}_{\widetilde\Pi}(\tZ)$, and
$\vartheta\in\nu^{\mathrm v}_{\widetilde\Pi}(\tZ)$,
\[
 H_\eps((\xi,\vartheta),(\xi,\vartheta))
 \geq \mu_t\bigl(|\xi|^2+\eps^4|\vartheta|^2\bigr).
\]
Thus $H_\eps$ is positive definite, and $\tZ$ is a nondegenerate critical
submanifold of the sectional curvature function.

\item The tensor
$\mathcal Q_t\in\Gamma(\operatorname{Sym}^2T^*P\otimes\mathbb C)$ is
horizontal, meaning that $\mathcal Q_t(V,\cdot)=0$, and satisfies
$\Lie_V\mathcal Q_t=2i\mathcal Q_t$, where $\Lie_V$ denotes the Lie derivative
in the direction of $V$.  For every $\widetilde\Pi\in\tZ$, one can choose a
$G_{t,\eps}$-orthonormal basis $e_1,e_2$ of $\widetilde\Pi$, with dual
coframe $e^1,e^2$, such that, for some
$\delta_{\widetilde\Pi}\in\mathbb C$,
$\mathcal Q_t|_{\widetilde\Pi}
=\delta_{\widetilde\Pi}(e^1-ie^2)\otimes(e^1-ie^2)$.
The quantity $|\delta_{\widetilde\Pi}|$ is bounded below on $\tZ$ by a
positive constant depending only on $t$.
Moreover, if $\widetilde\Pi\subset T_pP$ lies over $\Pi_b$ and
$\Sigma_b$ is a local $\theta_t$-horizontal lift of $T_b$ through $p$ with
$T_p\Sigma_b=\widetilde\Pi$, then the restriction of $\mathcal Q_t$ to
$\Sigma_b$ is parallel for the induced flat metric.

\item For every $0<\eps<\eps_0(t)$, there is
$s_0(t,\eps)>0$ such that the metric
$G_{t,\eps}+s\operatorname{Re}\mathcal Q_t$ has strictly positive
sectional curvature whenever $0<|s|<s_0(t,\eps)$.
\end{enumerate}
\end{theorem}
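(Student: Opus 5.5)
The plan is to prove the four assertions in the order stated, since each builds on the previous one.

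For assertion 1, I start from the Cheeger-deformed base $g_t$. Its zero-curvature planes, after the reparametrization $\mathcal R_t$, are exactly the tangent planes to the flat tori $T_b$ (M\"uter, \cite[Proposition~2.3]{Bettiol2014}). I then set $\Omega_t=\tfrac12(\omega_1+\omega_2+t\,\dd\alpha)$, where $\alpha$ is an explicit $\mathrm{SO}(3)$-invariant one-form chosen so that $\Omega_t$ restricts to zero on each $T_b$.
\begin{itemize}
\item Cohomology: since $\dd\alpha$ is exact, $[\Omega_t/(2\pi)]=[\omega_1/(4\pi)]+[\omega_2/(4\pi)]=(1,1)$.
\item Symplecticity: $\Omega_t$ is a small perturbation of the symplectic form $\tfrac12(\omega_1+\omega_2)$.
\item Connection: choose $\theta_t$ with $\dd\theta_t=\pi^*\Omega_t$.
\end{itemize}
Next I complete the square in the O'Neill formula for the connection metric $G_{t,\eps}$. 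For a unit plane spanned by $X^{\mathrm H}+aV$ and $Y^{\mathrm H}$, this writes the sectional curvature as $\sec_{g_t}(X,Y)$ minus $\tfrac34\eps^2|\Omega_t(X,Y)|^2$, plus vertical terms of the shape $\eps^4|\Omega_t(X,\cdot)|^2a^2$ and a cross term $\eps^2 a(\nabla\Omega_t)(X,Y,Y)$. Nonnegativity then reduces to the criterion of \cite{ChavesDerdzinskiRigas1992}: $|(\nabla_Y\Omega_t)(X,Y)|^2$ is bounded by $\sec_{g_t}(X,Y)$ times $|\Omega_t(X,\cdot)|^2$, up to constants. This holds for small $\eps$ because $\Omega_t$ is fat (it is nondegenerate). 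Away from mixed planes the base curvature is positive, so the criterion holds with room to spare. On mixed planes, $\partial_t|_{0}\nabla^{g_t}\Omega_t$ vanishes, so the cross term is $O(t^2)$ while $\sec$ is of order $t$ times the squared distance to $\cZ$. Equality can therefore occur only on $\cZ$, where $\Omega_t$ vanishes, which gives $\tZ$ as the zero set.

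For assertion 2, I first establish a quantitative Hessian bound on the base: $K_t\ge c_t\,\dist(\Pi,\cZ)^2$ near $\cZ$, including over $p_1=\pm p_2$. I would prove this by Taylor-expanding M\"uter's reparametrized formula, whose remainder is nonnegative \cite{Ziller2009}, and checking that the transverse Hessian is nondegenerate. Horizontal normal directions then inherit $\mu_t|\xi|^2$. In a vertical direction $\vartheta\in\operatorname{Hom}(\widetilde\Pi,\R V)$, the graph plane picks up curvature $\eps^4|\Omega_t(X,\cdot)\circ\vartheta|^2$ from the fatness term. This is bounded below because $\Omega_t$ is nondegenerate and vanishes on $\widetilde\Pi$, so $\Omega_t(X,\cdot)$ is nonzero on the normal directions. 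The mixed horizontal–vertical terms are $O(\eps^2|\xi||\vartheta|)$ times $\nabla\Omega_t$. On $\tZ$ this factor vanishes to first order, because $T_b$ is totally geodesic and $\Omega_t|_{T_b}=0$, so Cauchy--Schwarz absorbs the mixed terms.

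For assertion 3, I build $\mathcal Q_t=\pi^*(q)\,\phi^2$. Here $\phi$ is a horizontal complex one-form of weight $e^{i\lambda}$, meaning a section of $T^*B\otimes L$ where $L$ is the associated line bundle. Restricted to each $T_b$, the form $\phi$ should equal $\bar\zeta\,(e^1-ie^2)$, with $\zeta$ a parallel unit section of $L|_{T_b}$; such a section exists because $L|_{T_b}$ is flat, as $\Omega_t|_{T_b}=0$. I would construct $\phi$ from $\mathrm{SO}(3)$-equivariant data such as $(p_1\times p_2)$ and the complex structures of the two factors. The $\mathrm{SO}(3)$-equivariance makes the restriction automatically parallel on each torus. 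The delicate point is smoothness at $p_1=\pm p_2$, which I would handle with a cutoff-free explicit formula of degree matched to the Chern class.

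For assertion 4, I expand the curvature as $\sec_{G+s\operatorname{Re}\mathcal Q}=\sec_G+s\,\sec'+\tfrac{s^2}{2}\sec''+O(s^3)$. Strake's lemma together with flatness of $\Sigma_b$ gives $\sec'=0$ on $\tZ$. At second order, differentiating in the fiber direction gives $\Lie_V\operatorname{Re}\mathcal Q=-2\operatorname{Im}\mathcal Q$, which is trace-free on $\widetilde\Pi$. This produces a second fundamental form of size $\eps^{-1}|\delta|$, and the Gauss equation turns it into a contribution $\ge c|\delta|^2\eps^{-2}$ to $\sec''$. Minimizing the quadratic form $H_\eps+s\,\sec'+\tfrac{s^2}2\sec''$ over the normal directions, with the bound from assertion 2, bounds the negative correction by $s^2|\nabla\sec'|^2/\mu_t$ in the weighted norm. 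The \emph{main obstacle} is showing that $\nabla\sec'$ in vertical directions is $O(\eps^2)$, so that after dividing by $\eps^4$ the correction stays bounded and the $\eps^{-2}$ term wins for small $\eps$. Away from $\tZ$, compactness and the inequality $\sec_G\ge c\,\dist^2$ handle all small $s$.
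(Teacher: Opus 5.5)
Your outline of parts (1), (2) and (4) follows the paper's architecture: completing the square using the first-order cancellation of $\nabla^{g_t}\Omega_t$ on mixed planes, an $\eps^4$-weighted vertical block in the normal Hessian, a Gauss-equation gain of order $\eps^{-2}$, and minimization over normal directions. The construction you propose for part (3), however, is impossible. Suppose $\mathcal Q_t=\pi^*(q)\,\phi^2$ with $\phi$ a section of $T^*B\otimes_{\R}L$, where $c_1(L)=\pm(1,1)$. At an antipodal point $(p,-p)$, every $b\perp p$ gives a plane $\Pi_b\in\cZ$. As $u=b\times p$ runs over all unit vectors of $T_pS^2$, the plane $\Pi_b$ has $g_t$-orthonormal basis $e_1=c_t^{-1/2}(u,-u)$, $e_2=(-u,-u)$. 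Write $\phi(w_1,w_2)=\phi_1(w_1)+\phi_2(w_2)$. Requiring $\phi^2|_{\Pi_b}$ to be a nonzero multiple of $(e^1\mp ie^2)^2$ means $\phi(e_2)=\pm i\phi(e_1)\neq0$, with a sign that continuity makes independent of $u$. This forces $\phi_2=\lambda\phi_1$ for a fixed $\lambda$ with $|\lambda|=1$ and $\lambda\neq1$, and then $\phi(e_1)=c_t^{-1/2}(1-\lambda)\phi_1(u)$. Hence $\phi_1:T_pS^2\to L_{(p,-p)}$ must be a real isomorphism for every $p$. But $L$ restricted to the antidiagonal has degree $\langle(1,1),(1,-1)\rangle=0$, so it is trivial, and this would trivialize $TS^2$. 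For the equivariant $\phi$ you intend, you can see this directly: in the quotient model, the isotropy $\mathrm{diag}(e^{i\alpha},e^{-i\alpha})$ of $(e_3,-e_3)$ moves $((1,0),(0,1))$ exactly as the quotient circle does. It therefore fixes the fiber of $P$, and any equivariant weight-one $\phi$ vanishes there. Separately, diagonal $\mathrm{SO}(3)$-equivariance only relates points along the diagonal circles of $T_b$, so it cannot give parallelism on the two-dimensional torus. The paper avoids both problems by working directly in weight two with $\Psi=\zeta_1\odot\zeta_2$, where $\zeta_u(\dot u)=\det_{\mathbb C}(u,\dot u)$. Neither $\zeta_j$ descends to $P$ on its own, but the product does. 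On the planes above, its coefficient is quadratic in $u$, so it can be rotation invariant and nonzero. It is constant on the product lift $(\widetilde p(s_1),\widetilde p(s_2))$ because each $\zeta_j$ depends only on its own factor. The polynomial equivariant maps $\mathscr K$ and $\mathscr J_t$ then turn it into a multiple of $(e^1-ie^2)^2$, smoothly at $p_1=\pm p_2$.

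In part (4) you name the decisive estimate $|r_{\eps,\mathrm v}|\le C\eps^2$ but give no mechanism for it, and a term count is not enough. The linearized curvature $\dot\Rm(X,Y,Y,U)$ contains terms such as $(\nabla^2_{X,U}h)(Y,Y)$, individually of size $\eps^{-1}$. That only gives $r_{\eps,\mathrm v}=O(1)$, and then $\eps^{-4}|r_{\eps,\mathrm v}|^2$ would swamp $\kappa\eps^{-2}$. The paper gains the two missing powers of $\eps$ by differentiating the Codazzi equation of the totally geodesic surface $\Sigma_b$. This identifies $\dot\Rm(X,Y,Z,U)$ with the antisymmetrized normal derivative of $\dot\II$. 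The $U$-component $\dot\II^U=-\eps^{-1}\operatorname{Re}(i\mathcal Q_t)$ is parallel on $\Sigma_b$, so only $\langle\dot\II^{\mathrm H},\nabla^\perp U\rangle=O(\eps)$ survives. The factor $|V|=\eps$ in the graph coordinates then gives $O(\eps^2)$. This is a second essential use of the parallelism in part (3), which your construction does not supply. A smaller point concerns part (1). Nonnegativity requires the exact discriminant inequality $\mathfrak a_t^2\le(K_t-\tfrac34\eps^2\mathfrak b_t^2)q_t$, and $\eps$ cancels from the cross term there, so fatness and small $\eps$ do not help. Planes near but not on $\cM$ are not covered by a split into planes on and off $\cM$. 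They need the combined bounds $K_t\ge c(\dist(\Pi,\cM)^2+t^3\dist(\Pi,\cZ)^2)$ and $|\mathfrak a_t|\le C(t\dist(\Pi,\cM)+t^2\dist(\Pi,\cZ))$, together with $\eps^2\le c_0t^3$.
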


Theorem~\ref{thm:construction} immediately implies the main theorem.
\begin{proof}[Proof of Theorem~\ref{thm:main}]
Choose $0<t<t_0$, $0<\eps<\eps_0(t)$, and
$0<|s|<s_0(t,\eps)$.  Part~(4) of
Theorem~\ref{thm:construction} gives a positively curved metric on $P$, and
Lemma~\ref{lem:topology} identifies $P$ diffeomorphically with
$S^2\times S^3$.
\end{proof}

The remainder of the paper is devoted to the proof of
Theorem~\ref{thm:construction}.  We indicate how its four assertions follow
from the key estimates.

\noindent\emph{Parts (1) and (2).}
Lemma~\ref{lem:cheeger-lower-bound}, obtained from M\"uter's formula for
reparametrized sectional curvature under the Cheeger deformation, gives
the estimate
$K_t(\Pi)\geq
c\bigl(\dist(\Pi,\cM)^2+t^3\dist(\Pi,\cZ)^2\bigr)$.
For the connection, set $\alpha_{(p_1,p_2)}(x_1,x_2)
=\langle p_1\times p_2,x_2-x_1\rangle$ and
$\Omega_t=\tfrac12(\omega_1+\omega_2+t\,\dd\alpha)$.
Lemma~\ref{lem:first-order-cancellation} proves that the derivative at $t=0$
of $\nabla^{g_t}\Omega_t$ vanishes on mixed planes.  Together with the
distance estimates in that lemma, this allows us to complete the square in
the curvature expression.
The argument in Section~\ref{sec:connection} then proves that the resulting
connection metric is nonnegatively curved and has $\tZ$ as its set of
zero-curvature planes.  Consequently, the sectional curvature function has
vanishing differential along $\tZ$.  Its quadratic expansion in the fixed
normal splitting is one half of $H_\eps$ and gives the lower bound in
part~(2), including the different powers of $\eps$ in the horizontal and
vertical normal directions.  Together,
Sections~\ref{sec:cheeger} and~\ref{sec:connection} prove parts~(1) and~(2).

\noindent\emph{Part (3).}
Section~\ref{sec:perturbation-tensor} constructs $\mathcal Q_t$ globally.
The resulting smooth symmetric $2$-tensor on $P$ vanishes whenever one
argument is vertical and satisfies $\Lie_V\mathcal Q_t=2i\mathcal Q_t$.
Proposition~\ref{prop:tensor-on-zero-curvature-planes} gives the restriction
of $\mathcal Q_t$ to each zero-curvature plane explicitly and proves the
uniform lower bound for its coefficient.  It also shows that its restriction
to the corresponding local horizontal lift of the flat torus is parallel.

\noindent\emph{Part (4).}
Set $h=\operatorname{Re}\mathcal Q_t$.  Section~\ref{sec:perturbation}
shows that the first variation of sectional curvature vanishes on $\tZ$,
whereas the Gauss equation gives a second-order coefficient
$a_\eps\geq\kappa\eps^{-2}-C_0$ for uniform constants $\kappa>0$ and
$C_0>0$.  In the tubular coordinates of
Section~\ref{sec:nearby-planes}, minimizing the Taylor expansion over
neighboring planes subtracts
$\tfrac12\langle H_\eps^{-1}r_\eps,r_\eps\rangle$ from the second-order coefficient.
Here $r_\eps$ is the differential of the first variation of sectional
curvature in directions normal to $\tZ$.
Lemmas~\ref{lem:first-variation-estimate} and~\ref{lem:block-inverse} bound this
term independently of $\eps$.  Hence the coefficient remaining after
minimization over nearby planes is positive once $\eps$ is sufficiently small.
Lemma~\ref{lem:tubular-minimization} then gives positivity on a fixed tubular
neighborhood of $\tZ$.  On the compact complement of a smaller tubular
neighborhood, positivity follows from continuity.
Thus part~(4) is proved in Sections~\ref{sec:perturbation}
and~\ref{sec:nearby-planes}; the proof is completed in the final paragraph of
Section~\ref{sec:nearby-planes}.

For some constant $c_0>0$, the choices can be made so that
$\eps_0(t)^2\leq c_0t^3$.  The
parameters are therefore fixed in the order
$0<t<t_0$, $0<\eps<\eps_0(t)$, and
$0<|s|<s_0(t,\eps)$.  In particular, the perturbation is chosen only after
the parameters $t$ and $\eps$ have been fixed.

%% file: sections/cheeger.tex
\section{Zero-curvature planes after the Cheeger deformation}                     \label{sec:cheeger}

We begin the proof of Theorem~\ref{thm:construction} with a lower bound for
sectional curvature on the base.  The description of the zero-curvature planes below is
the one used in
\cite[Proposition~2.3]{Bettiol2014} and
\cite[Appendix~B]{BrendleHung2026}.  Brendle and Hung prove a distance
estimate at a fixed Cheeger parameter in
\cite[Proposition~B.1 and Corollary~B.2]{BrendleHung2026}.  We record the
dependence on the small Cheeger parameter needed for the connection metric and
verify smoothness at $p_1=\pm p_2$.

\begin{lemma}                                                                       \label{lem:cheeger-lower-bound}
There exist $c>0$ and $t_0>0$ such that
\begin{equation}                                                                    \label{eq:cheeger-lower-bound}
 K_t(\Pi)\geq
 c\bigl(\dist(\Pi,\cM)^2+t^3\dist(\Pi,\cZ)^2\bigr),
\end{equation}
for every $\Pi\in\Gr_2(TB)$ and every $0<t<t_0$.  Moreover, for each
$t>0$, the zero-curvature planes of $g_t$ are exactly the planes in
$\cZ$, and $\cZ$ is a smooth compact embedded four-dimensional submanifold of the
Grassmann bundle; in particular, it is smooth at planes based at points where
$p_1=\pm p_2$.
\end{lemma}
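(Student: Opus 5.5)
The plan is to use Müter's reparametrized curvature formula in the form recorded by Ziller \cite[Proposition~1.3]{Ziller2009}. For $X,Y\in T_{(p_1,p_2)}B$ and a small parameter $t$, it gives
\[
 K_t(\rho(X,Y))\cdot|C_t^{-1}X\wedge C_t^{-1}Y|_{g_t}^2
 =\tfrac12\Bigl(\Rm_{\bar g}(X,Y,Y,X)+t^3\,|[\Phi X,\Phi Y]|_{Q}^2+z_t(X,Y)\Bigr),
\]
where $\Phi:T_{(p_1,p_2)}B\to\mathfrak g$ is the orbit map adjoint, $\Phi(X)=p_1\times x_1+p_2\times x_2$ under the identification of $\mathfrak g$ with $\R^3$, and the bracket is the cross product. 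The remainder $z_t\geq0$ comes from the $A$-tensor/O'Neill term of the submersion $B\times G\to B$. It is nonnegative and of order $t$, and vanishes on mixed planes only where the relevant commutator term vanishes.

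I will work on the compact frame bundle $\mathscr F$ and split the estimate into two regimes. For the $\dist(\Pi,\cM)^2$ term, I use that $\Rm_{\bar g}(X,Y,Y,X)=|x_1\wedge y_1|^2+|x_2\wedge y_2|^2$ on the product of unit spheres. This quantity vanishes exactly on $\cM$ and is a Bott–Morse function in the normal directions of $\cM$: writing $\Pi$ as a graph over a nearby mixed plane, it is comparable to the square of the normal displacement. Hence $\Rm_{\bar g}\geq c\,\dist(\Pi,\cM)^2$ uniformly, and the remaining terms are nonnegative. For the $t^3\dist(\Pi,\cZ)^2$ term, it suffices by the first bound to work near $\cM$. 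On an exactly mixed plane $E=(e,0)$, $F=(0,f)$ with $e\perp p_1$ and $f\perp p_2$, we get $\Phi E=p_1\times e$ and $\Phi F=p_2\times f$. The cross product $(p_1\times e)\times(p_2\times f)$ vanishes iff these two vectors are parallel, iff there is a unit $b$ with $b\perp p_1,p_2$, $e\parallel b\times p_1$ and $f\parallel b\times p_2$. That is precisely $\Pi\in\cZ$. Locally $\cM$ is a smooth bundle (fiber $\mathbb{RP}^1\times\mathbb{RP}^1$ over $B$), and $\cZ\subset\cM$ is the zero set of the map $\Pi\mapsto[\Phi E,\Phi F]$. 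I will show this map vanishes transversally (nondegenerately) along $\cZ$ inside $\cM$, so that $|[\Phi E,\Phi F]|^2\geq c\,\dist_{\cM}(\Pi,\cZ)^2$ on $\cM$. For a general $\Pi$ near $\cM$, I project to the nearest mixed plane $\Pi'$. Since $|[\Phi X,\Phi Y]|^2$ is Lipschitz in $\Pi$, its value at $\Pi$ exceeds $c\,\dist(\Pi',\cZ)^2 - C\dist(\Pi,\cM)^2$. The error $Ct^3\dist(\Pi,\cM)^2$ is then absorbed by the first term for $t<t_0$. Combining with $\dist(\Pi,\cZ)\leq\dist(\Pi,\cM)+\dist(\Pi',\cZ)$ and the uniform comparability of the denominator to one gives \eqref{eq:cheeger-lower-bound}.

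The zero set statement follows immediately: for $t>0$, $K_t(\Pi)=0$ forces $\Pi\in\cM$ and then $\Pi\in\cZ$. Conversely, planes in $\cZ$ are tangent to the flat totally geodesic tori $T_b$, so $K_t=0$ there. For the submanifold structure I will parametrize $\cZ$ as follows. A point of $\cZ$ is determined by $\pm b\in\mathbb{RP}^2$ together with $(p_1,p_2)\in T_b$, modulo the redundancy. When $p_1\neq\pm p_2$, $b$ is forced to be $\pm p_1\times p_2/|p_1\times p_2|$, so $\cZ$ is locally a section over an open subset of $B$. When $p_1=\pm p_2$, $b$ ranges over the circle $p_1^\perp$. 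I will therefore parametrize $\cZ$ globally by $\{(b,p_1,p_2):b\perp p_1,p_2\}/\pm$, which is a smooth compact 4-manifold (a $T^2$-bundle over $\mathbb{RP}^2$). The map $(b,p_1,p_2)\mapsto\Pi_b$ is an injective immersion, and compactness then makes it an embedding.

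I expect the \emph{main obstacle} to be uniformity at points where $p_1=\pm p_2$. There the transversality of $[\Phi E,\Phi F]$ along $\cZ$ inside $\cM$ must be checked in the extra direction where $b$ rotates in $p_1^\perp$, and the immersion property must be verified there as well. I would compute directly in an adapted orthonormal frame with $p_1=p_2=e_3$ and $b=e_1$. In that frame $\Phi E,\Phi F$ are explicit, and the differential of the cross product in the four normal directions of $\cZ$ in $\cM$ can be written down. If the nondegeneracy fails there, I would instead extract a $t^3$ contribution from the remainder $z_t$, which is also positive off $\cZ$.
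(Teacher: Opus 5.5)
Your architecture is the paper's: Müter's formula, the Bott--Morse bound $\Rm_{g_0}(X,Y,Y,X)\geq c\dist(\Pi,\cM)^2$, nondegenerate vanishing of the commutator term along $\cZ$ inside $\cM$, and a tubular projection. Your parametrization of $\cZ$ by $\{(b,p_1,p_2):b\perp p_1,\,b\perp p_2\}/\pm$ as an injective immersion is a valid alternative to the paper's covering argument for smoothness. However, the step you call the main obstacle is left unproved, and you misdescribe it. At $p_1=\pm p_2$, rotating $b$ in $p_1^\perp$ moves \emph{along} $\cZ$, not normal to it. Also, the zero is clean rather than transverse, since it has codimension two for an $\R^3$-valued map.

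The paper's idea settles this uniformly. On a mixed frame, $\Phi E=u_1:=p_1\times e$ and $\Phi F=u_2:=p_2\times f$, and $(p,e)\mapsto p\times e$ is a submersion $V_2(\R^3)\to S^2$. Hence on the fourfold cover $V_2(\R^3)\times V_2(\R^3)\to\cM$, the map $\tau=(u_1,u_2)$ is a submersion. The preimage of $\cZ$ is $\tau^{-1}$ of the diagonal and antidiagonal of $S^2\times S^2$. At $(u,\pm u)$, the differential of $(u_1,u_2)\mapsto u_1\times u_2$ is, up to sign, $(\dot u_1,\dot u_2)\mapsto(\dot u_1\mp\dot u_2)\times u$. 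Its kernel is exactly the tangent space of $\{(v,\pm v)\}$. Compactness then gives $|u_1\times u_2|\geq c\,\dist_{\cM}(\cdot,\cZ)$ with no special treatment of $p_1=\pm p_2$, and the same description yields smoothness of $\cZ$. Your fallback to $z_t$ is unnecessary, and nothing in your plan supplies a lower bound for $z_t$.

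There is also a smaller slip in the tubular step. Lipschitz continuity of the \emph{squared} commutator only gives an error $C\dist(\Pi,\cM)$, which cannot be absorbed by $c\dist(\Pi,\cM)^2$. Instead, use Lipschitz continuity of $|[\Phi X,\Phi Y]|$ itself together with $(a-b)_+^2\geq\tfrac12a^2-b^2$.

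The converse direction of the zero-set statement has a gap. Since $K_t(\Pi)=\sec_{g_t}(C_t^{-1}\Pi)$, tangency of $\Pi_b$ to the flat totally geodesic torus $T_b$ gives $\sec_{g_t}(\Pi_b)=0$, not $K_t(\Pi_b)=0$. Likewise, identifying the zero set of $K_t$ with $\cZ$ only shows that the zero-curvature planes of $g_t$ form $C_t^{-1}\cZ$, whereas the lemma asserts they are $\cZ$. You need $C_t^{-1}\Pi_b=\Pi_b$, which the paper checks directly. The vector $E_b+F_b=b^*$ is an orbit direction, and $b$ is an eigenvector of the orbit tensor ($\mathcal P_pb=2b$ because $b\perp p_1,p_2$), so $E_b+F_b$ is an eigenvector of $C_t$. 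Meanwhile $F_b-E_b$ is orthogonal to the orbit and fixed by $C_t$. With this, both inclusions follow, and $\mathcal R_t$ preserves $\cZ$.
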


\begin{proof}
For $X=(x_1,x_2)$ and $Y=(y_1,y_2)$, define
$\mathcal W:\mathscr F\to
\Lambda^2\R^3\oplus\Lambda^2\R^3$ by
$\mathcal W(X,Y)=(x_1\wedge y_1,x_2\wedge y_2)$.  The numerator in the
sectional curvature of the product metric is
\[
\Rm_{g_0}(X,Y,Y,X)=\tfrac12(|x_1\wedge y_1|^2+|x_2\wedge y_2|^2),
\]
with the wedge norms taken in the unit sphere metrics.  On the frame bundle,
the zero set of $\mathcal W$ is $\rho^{-1}(\cM)$.  After an orthogonal change
of frame, take $X=E=(e,0)$ and $Y=F=(0,f)$.  For a frame variation
$\dot X=(\dot x_1,\dot x_2)$ and
$\dot Y=(\dot y_1,\dot y_2)$, one has
$(\dd\mathcal W)_{(X,Y)}(\dot X,\dot Y)
=(e\wedge\dot y_1,\dot x_2\wedge f)$.
The linearized orthonormality constraints show that this vanishes precisely
on $T\rho^{-1}(\cM)$.  Thus $\dd\mathcal W$ is injective on the normal bundle
of $\rho^{-1}(\cM)$, and compactness gives
$K_0(\rho(X,Y))\geq c_1\dist(\rho(X,Y),\cM)^2$.
We next use M\"uter's formula for reparametrized sectional curvature under the
Cheeger deformation, as recorded in
\cite[Proposition~1.3]{Ziller2009}.  For $p=(p_1,p_2)$, let
$\mathfrak m_p$ be the $Q_{\mathfrak g}$-orthogonal complement of the
isotropy algebra.  The orbit tensor
$\mathcal P_p:\mathfrak m_p\to\mathfrak m_p$ of $\bar g$ is defined by
$Q_{\mathfrak g}(\mathcal P_p z,w)=\bar g(z^*,w^*)$, where
$z^*=(z\times p_1,z\times p_2)$.  Hence
$\mathcal P_p z=2z-\langle z,p_1\rangle p_1-\langle z,p_2\rangle p_2$.
For $X\in T_pB$, let $X_{\mathfrak m}\in\mathfrak m_p$ be the unique element
such that $X-X_{\mathfrak m}^*$ is orthogonal to the orbit through $p$.  The
identity
$\mathcal P_p X_{\mathfrak m}=p_1\times x_1+p_2\times x_2$ also holds on
singular orbits.  Define
$\Phi(X,Y)=[\mathcal P_p X_{\mathfrak m},
\mathcal P_p Y_{\mathfrak m}]$ and
$\phi(\rho(X,Y))=|\Phi(X,Y)|_{Q_{\mathfrak g}}$.  The function
$\phi$ is well defined
because an orthogonal change of frame multiplies $\Phi$ by its
determinant.  Write $\Pi=\rho(X,Y)$.  Our parameter $t$ is
exactly the parameter in that proposition; only after the
deformation do we rescale by $1/2$.  Therefore
\begin{equation}                                                                    \label{eq:cheeger-formula}
 \Rm_{g_t}(C_t^{-1}X,C_t^{-1}Y,C_t^{-1}Y,C_t^{-1}X)=\Rm_{g_0}(X,Y,Y,X)+\frac{t^3}{8}\phi(\Pi)^2+z_t(X,Y),
\end{equation}
where $z_t\geq0$ denotes one half of the nonnegative remainder in M\"uter's
formula for $\bar g_t$.  The coefficient is $1/8$,
rather than $1/4$, because
the final rescaling multiplies the $(0,4)$-curvature identity by $1/2$;
$C_t$ is unchanged.

Within $\cM$, the zero set of $\phi$ is exactly $\cZ$.  To verify
smoothness, let
$V_2(\R^3)=\{(p,e)\in S^2\times S^2:\langle p,e\rangle=0\}$ be the Stiefel
manifold of ordered orthonormal two-frames.  Give it the metric induced from
$S^2\times S^2$, and give
$\widehat{\cM}=V_2(\R^3)\times V_2(\R^3)$ the product metric; denote its
distance function by $\dist_{\widehat{\cM}}$.  The map
$\pi_{\cM}:\widehat{\cM}\to\cM$ defined by
$\pi_{\cM}(p_1,\widehat e,p_2,\widehat f)
=\Span\{(\widehat e,0),(0,\widehat f)\}$ is a fourfold covering.  Put
$u_1=p_1\times\widehat e$ and
$u_2=p_2\times\widehat f$.  The map
$\tau:\widehat{\cM}\to S^2\times S^2$ defined by $\tau=(u_1,u_2)$ is a submersion,
because $(p,e)\mapsto p\times e$ is a submersion from $V_2(\R^3)$ to $S^2$.
For the $g_0$-orthonormal frame adapted to the product factors,
$E=(\sqrt2\,\widehat e,0)$ and $F=(0,\sqrt2\,\widehat f)$, one has
$\Phi(E,F)=2u_1\times u_2$.  Consequently,
\[
 \pi_{\cM}^{-1}(\cZ)=
 \tau^{-1}\bigl(\{(u,u):u\in S^2\}\cup\{(u,-u):u\in S^2\}\bigr).
\]
The diagonal and antidiagonal in $S^2\times S^2$ are disjoint embedded
submanifolds of codimension two.  Their inverse image under $\tau$ is
therefore a smooth codimension-two submanifold of $\widehat{\cM}$.  It is
invariant under the deck transformations of $\pi_{\cM}$, so its quotient is a
smooth embedded four-dimensional submanifold $\cZ\subset\cM$.  This argument
also applies when $p_1=\pm p_2$.

Transversality and compactness give
$|u_1\times u_2|\geq c_2\dist_{\widehat{\cM}}
((p_1,\widehat e,p_2,\widehat f),\pi_{\cM}^{-1}(\cZ))$.
Since $\pi_{\cM}$ is a finite covering and
$\phi(\pi_{\cM}(p_1,\widehat e,p_2,\widehat f))=2|u_1\times u_2|$, it follows that,
after decreasing $c_2$,
$\phi\geq c_2\dist_{\cM}(\,\cdot\,,\cZ)$ on $\cM$.
A tubular projection extends the comparison to a neighborhood of $\cM$.
Smoothness of the map $\Phi$ on the frame bundle, together with the invariance of
its norm under orthogonal changes of frame and the preceding estimate, gives
$\dist(\Pi,\cZ)\leq
C\bigl(\dist(\Pi,\cM)+\phi(\Pi)\bigr)$.
Combining this inequality with the estimate for $K_0$ above and
\eqref{eq:cheeger-formula}, and using $t^3\leq1$, proves
\eqref{eq:cheeger-lower-bound} in a fixed tubular neighborhood of $\cM$.
Compactness gives the same estimate on the complement.  If $K_t(\Pi)=0$, then the left-hand
side of \eqref{eq:cheeger-formula} vanishes, so the nonnegative terms
$\Rm_{g_0}(X,Y,Y,X)$ and $\phi(\Pi)^2$ vanish separately.
Thus $\Pi\in\cM$ and then $\Pi\in\cZ$.  Conversely, on every
$\Pi_b$ the vectors $E_b+F_b$ and $F_b-E_b$ are eigenvectors of $C_t$,
by the decomposition into the orbit direction and its orthogonal complement
used above.  Hence
$C_t^{-1}\Pi_b=\Pi_b$.  This plane is tangent to the flat, totally geodesic
torus $T_b$ for $g_t$, so $K_t(\Pi_b)=0$.  In
particular, all three nonnegative terms in \eqref{eq:cheeger-formula},
including $z_t$, vanish on $\cZ$.  Thus $\cZ$ is the zero set of $K_t$ for
$t>0$.  Since
reparametrization is a diffeomorphism of the Grassmann bundle and preserves
$\cZ$, the zero-curvature planes of $g_t$ are exactly $\cZ$ as well.
\end{proof}

%% file: sections/connection.tex
\section{The connection metric and its curvature}                                  \label{sec:connection}

We now choose a symplectic representative of the primitive class $(1,1)$
that vanishes on every flat torus $T_b$.  For a principal circle bundle,
nondegeneracy of the curvature form is precisely Weinstein's fatness
condition~\cite{Weinstein1980}.  The additional vanishing and derivative
estimates below allow us to verify the curvature inequalities for a
connection metric even when the base has zero-curvature planes.  Related
curvature estimates for connection metrics on vector and sphere bundles appear in
\cite{StrakeWalschap1990,Tapp2003}.  Define the
one-form $\alpha$ on $B$ by
$\alpha_{(p_1,p_2)}(x_1,x_2)=\langle p_1\times p_2,x_2-x_1\rangle$, and set
$\Omega_t=\tfrac12(\omega_1+\omega_2+t\,\dd\alpha)$.
This form is closed and satisfies
$\left[\Omega_t/(2\pi)\right]=(1,1)$ in $H^2(B;\Z)$.
Because $\Omega_0$ is symplectic, $\Omega_t$ is symplectic for all
sufficiently small $t$.  Compactness and smooth dependence on $t$ give
constants $0<q_*\leq q^*<\infty$, uniform for small $t$, such that
$q_*\leq q_t(X):=|\iota_X\Omega_t|_{g_t}^2\leq q^*$ for every $g_t$-unit
vector $X$.

For $(X_0,Y_0)\in\mathscr F$, put $\Pi=\rho(X_0,Y_0)$ and smoothly
orthonormalize $C_t^{-1}X_0,C_t^{-1}Y_0$ to obtain an ordered
$g_t$-orthonormal pair $X,Y$ spanning $\mathcal R_t(\Pi)$.  Define
$\mathfrak a_t(X,Y)=(\nabla^{g_t}_X\Omega_t)(X,Y)$ and
$\mathfrak b_t(X,Y)=\Omega_t(X,Y)$.  We suppress the frame arguments from
$\mathfrak a_t$, $\mathfrak b_t$, and $q_t(X)$ below.  These quantities
depend on the chosen frame, whereas $K_t$, $\dist(\Pi,\cM)$, and
$\dist(\Pi,\cZ)$ are evaluated at $\Pi$, before applying $\mathcal R_t$.
All estimates below are uniform in the chosen smooth orthonormalization.

\begin{lemma}                                                                       \label{lem:first-order-cancellation}
On $\mathscr F$, one has $\mathfrak a_0=0$, and
$\left.\partial_t\mathfrak a_t\right|_{t=0}=0$ on
$\rho^{-1}(\cM)$.  Moreover, $\mathfrak a_t=\mathfrak b_t=0$ on
$\rho^{-1}(\cZ)$.  Consequently,
\begin{equation}                                                                    \label{eq:a-b-distance-estimates}
 |\mathfrak a_t|\leq C\bigl(t\dist(\Pi,\cM)+t^2\dist(\Pi,\cZ)\bigr)
 \;\text{and}\; |\mathfrak b_t|\leq C\dist(\Pi,\cZ).
\end{equation}
\end{lemma}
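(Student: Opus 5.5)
The plan is to treat $\mathfrak a_t$ and $\mathfrak b_t$ as smooth functions of $(t,X_0,Y_0)$ on $[0,t_0)\times\mathscr F$. The orthonormalization of $C_t^{-1}X_0,C_t^{-1}Y_0$ is smooth in $t$, so both are smooth. I will prove the three vanishing statements and then derive \eqref{eq:a-b-distance-estimates} by Taylor expansion in $t$ combined with Lipschitz bounds transverse to $\cM$ and $\cZ$.

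\emph{Step 1: $\mathfrak a_0=0$.} At $t=0$ we have $g_0=\tfrac12\bar g$, a product of round metrics. Each $\omega_i$ is parallel for its factor, so $\Omega_0=\tfrac12(\omega_1+\omega_2)$ is $\nabla^{g_0}$-parallel, and $\mathfrak a_0\equiv0$ on all of $\mathscr F$.

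\emph{Step 2: vanishing on $\cZ$.} Let $R_b$ be the diagonal reflection across $b^\perp$; $T_b$ is a component of its fixed set. The map $R_b$ is an isometry of every $g_t$, since the Cheeger deformation is natural under the normalizer of the diagonal action and reflections normalize $\mathrm{SO}(3)$. Because $R_b$ reverses orientation on each factor, $R_b^*\omega_i=-\omega_i$. Since $\det R_b=-1$, we get $R_bp_1\times R_bp_2=-R_b(p_1\times p_2)$, hence $R_b^*\alpha=-\alpha$. Thus $R_b^*\Omega_t=-\Omega_t$, and so $R_b^*(\nabla^{g_t}\Omega_t)=-\nabla^{g_t}\Omega_t$.

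On $\rho^{-1}(\cZ)$ the proof of Lemma~\ref{lem:cheeger-lower-bound} gives $C_t^{-1}\Pi_b=\Pi_b$. So $X,Y$ are tangent to $T_b$ and fixed by $dR_b$. Evaluating the anti-invariant tensors on fixed vectors gives $\mathfrak a_t=-\mathfrak a_t$ and $\mathfrak b_t=-\mathfrak b_t$, so both vanish. (For $\mathfrak b_t$ one can also check directly: $\omega_i$ vanish on mixed planes, and $\alpha|_{T_b}=0$ because $p_1\times p_2\parallel b\perp x_i$; hence $\dd\alpha|_{T_b}=0$.)

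\emph{Step 3: first-order cancellation on $\cM$.} Differentiate $\mathfrak a_t$ at $t=0$. The terms coming from the $t$-derivative of the frame are contractions of $\nabla^{g_0}\Omega_0=0$, so they drop out. What remains is
\[
 \partial_t\mathfrak a_t|_{t=0}=-\Omega_0(\dot\nabla_XX,Y)-\Omega_0(X,\dot\nabla_XY)+\tfrac12(\nabla^{g_0}_X\dd\alpha)(X,Y),
\]
with $X=(e,0)$, $Y=(0,f)$ suitably normalized. Here $\dot\nabla$ is obtained from the Koszul formula applied to $\dot g=\partial_tg_t|_{t=0}$. By Ziller's convention, $\dot g$ is $-\tfrac12$ times a multiple of $Q_{\mathfrak g}(\mathcal P_p X_{\mathfrak m},\mathcal P_p Y_{\mathfrak m})$ expressed through the orbit tensor. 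By $\mathrm{SO}(3)$-equivariance of everything involved, including $\alpha$, I can reduce to a normal form for $(p_1,p_2)$ and then compute both terms explicitly. The claim to verify is that the contribution of $\dot\nabla$ exactly cancels $\tfrac12(\nabla\dd\alpha)(X,X,Y)$. Indeed, $\alpha$ is essentially designed as the one-form making $\Omega_t$ "parallel to first order" on mixed planes. This explicit computation, including the check that nothing degenerates at $p_1=\pm p_2$ (where $\alpha$ vanishes to first order), is the step I expect to be the main obstacle.

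\emph{Step 4: estimates.} By Step 1, write $\mathfrak a_t=t\,a_1+t^2a_2(t)$ with $a_1,a_2$ smooth on the compact frame bundle and uniformly bounded in $C^1$. By Step 3, $a_1=0$ on $\rho^{-1}(\cM)$. By Step 2, $t\,a_1+t^2a_2(t)=0$ on $\rho^{-1}(\cZ)\subset\rho^{-1}(\cM)$ for all $t>0$; since $a_1$ already vanishes there, $a_2(t)=0$ on $\rho^{-1}(\cZ)$. The Lipschitz bound for smooth functions vanishing on a compact submanifold, with uniform constants since $\rho$ is a Riemannian submersion up to constants, then gives $|a_1|\le C\dist(\Pi,\cM)$ and $|a_2|\le C\dist(\Pi,\cZ)$. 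The same argument applied to $\mathfrak b_t$, which vanishes on $\rho^{-1}(\cZ)$, yields $|\mathfrak b_t|\le C\dist(\Pi,\cZ)$. This proves \eqref{eq:a-b-distance-estimates}.
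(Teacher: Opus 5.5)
\textbf{Gap: Step~3, the first-order cancellation, is asserted rather than proved.} Your Steps~1, 2 and~4 are fine. In Step~2 the reflection argument is a valid alternative to the paper's route: $R_b$ is a $g_t$-isometry, $R_b^*\Omega_t=-\Omega_t$, and $\dd R_b$ is the identity on $T(T_b)$. The paper instead combines $\Omega_t|_{T_b}=0$ with total geodesy of $T_b$. Step~4 is the paper's Taylor-expansion argument.

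Step~3 is the real content of the lemma. You identify the right quantity, $\mathcal V_\Omega(Z;X,Y)=(\nabla_Z\dot\Omega)(X,Y)-\Omega_0(\dot\nabla_ZX,Y)-\Omega_0(X,\dot\nabla_ZY)$ at $Z=X$, with $\dot\Omega=\tfrac12\dd\alpha$. But you then only say the $\dot\nabla$-terms cancel because $\alpha$ was ``designed'' for this, which restates the claim. The step cannot be skipped. Without it, Steps~1--2 only give $|\mathfrak a_t|\le Ct\,\dist(\Pi,\cZ)$. A term $\mathfrak a_t^2\sim t^2\dist(\Pi,\cZ)^2$ on mixed planes could not be absorbed by the $t^3\dist(\Pi,\cZ)^2$ lower bound of Lemma~\ref{lem:cheeger-lower-bound}.

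\textbf{The missing computation.} Put $\beta(x_1,x_2)=p_1\times x_1+p_2\times x_2=\mathcal P_pX_{\mathfrak m}$. Ziller's convention gives $\dot g(X,Y)=-\tfrac12\langle\beta X,\beta Y\rangle$, and for extensions parallel at the point, $(\nabla_Z\beta)(X)=z_1\times x_1+z_2\times x_2$. Write $\Omega_0=g_0(J\cdot,\cdot)$ with $J=(p_1\times\cdot)\oplus(p_2\times\cdot)$, and take $E=(e,0)$, $F=(0,f)$.
\begin{itemize}
\item Insert $Z=JF$ and $Z=JE$ into $2g_0(\dot\nabla_XY,Z)=(\nabla_X\dot g)(Y,Z)+(\nabla_Y\dot g)(X,Z)-(\nabla_Z\dot g)(X,Y)$. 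This yields $\Omega_0(\dot\nabla_EE,F)=0$ and $\Omega_0(E,\dot\nabla_EF)=-\tfrac12|e|^2\langle p_1\times p_2,f\rangle$.
\item Differentiate $\dd\alpha(E,F)=\langle e\times p_2,f\rangle+\langle p_1\times f,e\rangle$. This gives $(\nabla_E\dot\Omega)(E,F)=-\tfrac12|e|^2\langle p_1\times p_2,f\rangle$.
\end{itemize}
These cancel, so $\mathcal V_\Omega(E;E,F)=0$, and symmetrically $\mathcal V_\Omega(F;F,E)=0$.

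\textbf{General frames.} You need both identities, not only your normalized frame $X=(e,0)$, $Y=(0,f)$. The set $\rho^{-1}(\cM)$ also contains rotated frames $X=aE+bF$, $Y=\pm(-bE+aF)$. Since $\mathcal V_\Omega$ is linear in its first slot and alternating in the last two,
\[
\mathcal V_\Omega(X;X,Y)=\pm\bigl(a\,\mathcal V_\Omega(E;E,F)-b\,\mathcal V_\Omega(F;F,E)\bigr).
\]
None of these formulas divides by $|p_1\times p_2|$, so the points with $p_1=\pm p_2$ need no separate treatment.
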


\begin{proof}
Let $\bar g$ be the unit product metric, so that
$g_0=\frac12\bar g$.  Set $J_i x=p_i\times x$,
$J=J_1\oplus J_2$, and $\beta(x_1,x_2)=J_1x_1+J_2x_2$.
Throughout this first-order calculation, $\nabla=\nabla^{g_0}$, and an
overdot denotes $\left.\partial_t\right|_{t=0}$.
Then $\Omega_0(X,Y)=g_0(JX,Y)$.  Differentiating
$g_t(X,Y)=g_0(C_tX,Y)$ and the definition of $\Omega_t$ at $t=0$ gives
$\dot g(X,Y)=-\tfrac12\langle\beta(X),\beta(Y)\rangle$ and
$\dot\Omega=\tfrac12\dd\alpha$.
Terms arising from the variation of the frame do not contribute because they
are contracted with $\nabla^{g_0}\Omega_0=0$.

Choose extensions that are parallel at the point with respect to the product
connection.  Then
$(\nabla_Z\beta)(X)=z_1\times x_1+z_2\times x_2$, and hence
\begin{equation}\label{eq:metric-variation-derivative}
 (\nabla_Z\dot g)(X,Y)=-\frac12\{\langle(\nabla_Z\beta)(X),\beta(Y)\rangle+\langle\beta(X),(\nabla_Z\beta)(Y)\rangle\}.
\end{equation}
For $E=(e,0)$ and $F=(0,f)$, direct differentiation of $\alpha$ gives
$\dd\alpha(E,F)
=\langle e\times p_2,f\rangle+\langle p_1\times f,e\rangle$.
Writing $D$ for the flat connection on $\R^3$, use
$D_e e=-|e|^2p_1$ and $D_f f=-|f|^2p_2$ to obtain
\begin{equation}                                                                    \label{eq:omega-variations}
 (\nabla_E\dot\Omega)(E,F)=-\frac12|e|^2\langle p_1\times p_2,f\rangle
 \;\text{and}\; (\nabla_F\dot\Omega)(F,E)=\frac12|f|^2\langle p_1\times p_2,e\rangle.
\end{equation}
Let $\dot{\nabla}=\left.\partial_t\nabla^{g_t}\right|_{t=0}$.  The
Levi-Civita variation
formula is
\begin{equation}                                                                    \label{eq:connection-variation}
 2g_0(\dot{\nabla}_X Y,Z)
 =(\nabla_X\dot g)(Y,Z)+(\nabla_Y\dot g)(X,Z)
 -(\nabla_Z\dot g)(X,Y).
\end{equation}
Substitution of \eqref{eq:metric-variation-derivative} into
\eqref{eq:connection-variation}, first with $Z=JF$ and then with
$Z=JE$, yields
\begin{equation}                                                                    \label{eq:connection-variations}
 \begin{aligned}
 \Omega_0(\dot{\nabla}_E E,F)&=0
 &\text{and } \Omega_0(E,\dot{\nabla}_E F)&=-\frac12|e|^2\langle p_1\times p_2,f\rangle,\\
 \Omega_0(\dot{\nabla}_F F,E)&=0
 &\text{and } \Omega_0(F,\dot{\nabla}_F E)&=\frac12|f|^2\langle p_1\times p_2,e\rangle.
 \end{aligned}
\end{equation}
For instance,
$2g_0(\dot{\nabla}_E F,J E)=-|e|^2\langle p_1,J_2f\rangle
=-|e|^2\langle p_1\times p_2,f\rangle$, which is the second identity in
\eqref{eq:connection-variations}.

Let $\mathcal V_\Omega$ denote the first variation of
$\nabla^{g_t}\Omega_t$ at $t=0$; explicitly,
\[
 \mathcal V_\Omega(Z;X,Y)=(\nabla_Z\dot\Omega)(X,Y)
 -\Omega_0(\dot{\nabla}_Z X,Y)-\Omega_0(X,\dot{\nabla}_Z Y).
\]
Equations \eqref{eq:omega-variations} and \eqref{eq:connection-variations} give
$\mathcal V_\Omega(E;E,F)=\mathcal V_\Omega(F;F,E)=0$.
The tensor $\mathcal V_\Omega$ is linear in its first slot and alternating in the last two.
If $X=aE+bF$ and $Y=-bE+aF$ form an oriented orthonormal rotation
of the frame $E,F$, then $X\wedge Y=E\wedge F$, and therefore
$\mathcal V_\Omega(X;X,Y)=\mathcal V_\Omega(aE+bF;E,F)
=a\mathcal V_\Omega(E;E,F)-b\mathcal V_\Omega(F;F,E)=0$.
The calculation also applies when $p_1=\pm p_2$.  This proves the first two
assertions of the lemma.

On $T_b$, both $\omega_i$ vanish because each factor contributes only
one tangent direction.  Moreover, $p_1\times p_2$ is parallel to $b$,
while every tangent vector to $T_b$ is perpendicular to $b$.  Hence
$\alpha|_{T_b}=0$ and $\Omega_t|_{T_b}=0$.  Since $T_b$ is totally geodesic,
this implies $\mathfrak a_t=\mathfrak b_t=0$ on $\rho^{-1}(\cZ)$.

Finally, write
$\mathfrak a_t=t\mathfrak a^{(1)}+t^2\mathfrak a^{(2)}_t$.
The vanishing of $\mathfrak a^{(1)}$ on $\rho^{-1}(\cM)$ and smoothness in a
tubular neighborhood give
$|\mathfrak a^{(1)}|\leq C\dist(\rho(\,\cdot\,),\cM)$.  Both
$\mathfrak a_t$ and $\mathfrak a^{(1)}$ vanish on
$\rho^{-1}(\cZ)$, so
$|\mathfrak a^{(2)}_t|\leq C\dist(\rho(\,\cdot\,),\cZ)$.  This proves the
first estimate in \eqref{eq:a-b-distance-estimates}; the second follows from
the vanishing of $\mathfrak b_t$ on the smooth compact submanifold
$\rho^{-1}(\cZ)$.
\end{proof}

We now derive the strict inequality needed below.
Lemma~\ref{lem:cheeger-lower-bound}, Lemma~\ref{lem:first-order-cancellation},
and the bounds for $q_t$ above give
$K_t(\Pi)q_t\geq cq_*\bigl(\dist(\Pi,\cM)^2
+t^3\dist(\Pi,\cZ)^2\bigr)$ and
$\mathfrak a_t^2\leq C\bigl(t^2\dist(\Pi,\cM)^2
+t^4\dist(\Pi,\cZ)^2\bigr)$.  After decreasing $t_0$ if necessary, assume
$Ct_0^2\leq cq_*/2$ and $Ct_0\leq cq_*/2$.  These choices make each term in
the upper bound for $\mathfrak a_t^2$ at most one half of the corresponding
term in the lower bound for $K_t(\Pi)q_t$.  After renaming the constant, we obtain
\begin{equation}                                                                    \label{eq:base-strict-inequality}
 K_t(\Pi)q_t-\mathfrak a_t^2
 \geq c\bigl(\dist(\Pi,\cM)^2+t^3\dist(\Pi,\cZ)^2\bigr).
\end{equation}
\subsection{The nonnegative connection metric}

In the quotient model of Section~\ref{sec:perturbation-tensor}, let
$\theta_0$ be the connection
induced by the sum of the two Hopf connection forms, and set
$\theta_t=\theta_0+\tfrac t2\pi^*\alpha$.  Then
$\dd\theta_t=\pi^*\Omega_t$.  For $\eps>0$, define
$G_{t,\eps}=\pi^*g_t+\eps^2\theta_t\otimes\theta_t$.
On horizontal vectors, $g_t$ and $\Omega_t$ denote their pullbacks to $P$.
Let $V$ be the fundamental vertical vector field normalized by
$\theta_t(V)=1$, and put $U=\eps^{-1}V$, so that $U$ is unit.
In this subsection, $\nabla=\nabla^{G_{t,\eps}}$.
For all comparisons as $\eps\to0$, however, we use the fixed metric
$G_{t,1}=\pi^*g_t+\theta_t\otimes\theta_t$ on $P$.  Thus
$V$, rather than $U$, is unit with respect to the auxiliary metric.  In the normal Hessian
argument below, we use $G_{t,1}$ to construct an auxiliary metric compatible
with the normal splitting and tubular coordinates on $\Gr_2(TP)$.  All
resulting normal bundles, distances, and covector norms are independent of
$\eps$.

Every tangent two-plane in $P$ has an orthonormal basis
$X,\lambda_1Y+\lambda_2U$, where $X,Y$ are horizontal and
$g_t$-orthonormal and $\lambda_1^2+\lambda_2^2=1$.  If $\lambda_1=0$,
choose any horizontal unit vector $Y$ orthogonal to $X$; otherwise, $Y$ is
determined up to sign by the plane.  In either case, let $\Pi$ satisfy
$\Span\{X,Y\}=\mathcal R_t(\Pi)$.  The curvature formula below is independent
of the choice of $Y$ when $\lambda_1=0$.  The Koszul formula gives
\begin{equation}                                                                    \label{eq:connection-koszul}
 \begin{aligned}
 \nabla_X Y&=(\nabla_X^{g_t}Y)^{\mathrm H}-\frac{\eps}{2}\Omega_t(X,Y)U,
 &\nabla_X U=\nabla_U X&=\frac{\eps}{2}(\iota_X\Omega_t)^\sharp,
 &\nabla_U U&=0.
 \end{aligned}
\end{equation}
Here $\sharp$ is taken with respect to $g_t$.
These identities also follow by specializing O'Neill's submersion equations;
see~\cite[pp.~465--466]{ONeill1966}.  Suppressing the metric from the notation
for the curvature tensor in the next display, they give
\[
 \Rm(X,Y,Y,X)=K_t(\Pi)-\frac{3\eps^2}{4}\mathfrak b_t^2,\;
 \Rm(X,U,U,X)=\frac{\eps^2}{4}q_t,\;\text{and}\;
 \Rm(X,Y,U,X)=-\frac{\eps}{2}\mathfrak a_t.
\]
Here $\mathfrak a_t$, $\mathfrak b_t$, and $q_t$ are the quantities associated
with $X,Y$ above, while $K_t$ and all Grassmann distances are evaluated at
$\Pi$.

The preceding identities express sectional curvature as a quadratic form in
$\lambda_1$ and $\lambda_2$.  For a principal circle bundle with constant
fiber length, its nonnegativity condition is the specialization of the
criterion of Chaves, Derdzinski, and Rigas
\cite[main theorem, condition~(ii)]{ChavesDerdzinskiRigas1992}.  The exact
necessary-and-sufficient inequalities for invariant metrics on circle bundles are
given in~\cite[Lemma~3.2]{ShankarTappTuschmann2005}.  We complete the square
explicitly because the equality case, rather than the criterion itself, is
essential here.

The sectional curvature of the plane spanned by
$X,\lambda_1Y+\lambda_2U$ is exactly
\begin{align}
 \sec_{G_{t,\eps}}\bigl(\Span\{X,\lambda_1Y+\lambda_2U\}\bigr)
 &=\left(K_t(\Pi)-\frac{3\eps^2}{4}\mathfrak b_t^2\right)\lambda_1^2
   -\eps\mathfrak a_t\lambda_1\lambda_2+\frac{\eps^2q_t}{4}\lambda_2^2 \notag\\
 &=\frac{\eps^2q_t}{4}\left(\lambda_2-\frac{2\lambda_1\mathfrak a_t}{\eps q_t}\right)^2
   +\lambda_1^2\left(\frac{K_t(\Pi)q_t-\mathfrak a_t^2}{q_t}
   -\frac{3\eps^2}{4}\mathfrak b_t^2\right).                       \label{eq:connection-curvature-square}
\end{align}
By \eqref{eq:a-b-distance-estimates} and
\eqref{eq:base-strict-inequality}, there is $c_0>0$ such that
$0<\eps^2\leq c_0t^3$ implies
\begin{equation}                                                                    \label{eq:connection-lower-bound}
 \frac{K_t(\Pi)q_t-\mathfrak a_t^2}{q_t}
 -\frac{3\eps^2}{4}\mathfrak b_t^2
 \geq c\bigl(\dist(\Pi,\cM)^2+t^3\dist(\Pi,\cZ)^2\bigr).
\end{equation}
Indeed, division of \eqref{eq:base-strict-inequality} by $q_t\leq q^*$ leaves
a fixed positive multiple of
$\dist(\Pi,\cM)^2+t^3\dist(\Pi,\cZ)^2$, while
$\mathfrak b_t^2\leq C\dist(\Pi,\cZ)^2$.  Choosing $c_0$ so that the latter
term, bounded by $C c_0t^3\dist(\Pi,\cZ)^2$, is at most half of the former proves
\eqref{eq:connection-lower-bound}.
It follows from \eqref{eq:connection-curvature-square} that
$G_{t,\eps}$ has nonnegative sectional curvature.  Equality occurs exactly
when both the square and the second term in
\eqref{eq:connection-curvature-square} vanish.  The square cannot vanish when
$\lambda_1=0$, because then $|\lambda_2|=1$.  Hence $\lambda_1\neq0$, and
\eqref{eq:connection-lower-bound} forces $\Pi\in\cZ$.  Then
$\mathfrak a_t=0$, so the square forces $\lambda_2=0$.  Conversely these two
conditions make every term vanish.  Thus the set of zero-curvature planes of
$G_{t,\eps}$ is the smooth compact embedded submanifold
$\tZ\subset\Gr_2(TP)$ consisting of the $\theta_t$-horizontal lifts of the
planes in $\cZ$.  Since the sectional curvature function is nonnegative and
vanishes on $\tZ$, its differential vanishes there.

\subsection{The normal Hessian}

The same completion of squares determines the dependence of the normal
Hessian along $\tZ$ on $\eps$, but we first choose normal coordinates
independent of $\eps$.
Fix a tubular neighborhood parametrization for
$\cZ\subset\Gr_2(TB)$.  Along each radial curve, lift the curve of
footpoints $\theta_t$-horizontally to $P$, apply $\mathcal R_t$ to the
corresponding base planes, and lift those planes into $\ker\theta_t$.  This
gives an $S^1$-equivariant tubular parametrization whose image consists of
horizontal planes.  Taking graphs of maps from these planes to $\R V$ extends
it to a tubular parametrization of $\tZ$ in $\Gr_2(TP)$.  Along the zero
section, denote the
rank-four subbundle of variations through horizontal planes by
$\nu^{\mathrm h}(\tZ)$, and denote the rank-two subbundle consisting of
variations represented by graphs into $\R V$ by $\nu^{\mathrm v}(\tZ)$.  At
$\widetilde\Pi\subset T_pP$, the latter has fiber
$\nu^{\mathrm v}_{\widetilde\Pi}(\tZ)
=\operatorname{Hom}(\widetilde\Pi,\R V_p)$.  These subbundles form a fixed
complement to $T\tZ$.  Choose an auxiliary metric on $\Gr_2(TP)$ that makes
$T\tZ\oplus\nu^{\mathrm h}(\tZ)\oplus\nu^{\mathrm v}(\tZ)$ orthogonal and
gives the last two summands the norms induced by the fixed base metric and
$G_{t,1}$, respectively.  Thus
$\nu(\tZ)=\nu^{\mathrm h}(\tZ)\oplus\nu^{\mathrm v}(\tZ)$, with all summands and
norms independent of $\eps$.

Fix $z_0\in\tZ$.  Choose Fermi coordinates in the horizontal normal
directions and write $\xi\in\nu^{\mathrm h}_{z_0}(\tZ)$.  Let $\Pi(\xi)$ be
the corresponding base plane before Cheeger reparametrization, and choose a
smooth oriented $g_t$-orthonormal frame $X(\xi),Y(\xi)$ spanning
$\mathcal R_t(\Pi(\xi))$.  Use the same letters for their horizontal lifts
and parametrize variations into $\R V$ by
$\widetilde\Pi(\xi,\vartheta)=
\Span\{X(\xi)+\vartheta_1V,Y(\xi)+\vartheta_2V\}$.
Set $w=\eps^2\vartheta$.  As observed above, the differential of the
sectional curvature function vanishes on $\tZ$.  Its quadratic Taylor term
therefore represents one half of the restriction of the Hessian to
$\nu(\tZ)$, independently of the chosen extension of these coordinates.
The quadratic Taylor term of the
sectional curvature at $z_0$, expressed in the rescaled variables
$(\xi,w)$, has the following form.  There exist a linear map
$L_{z_0}:\nu^{\mathrm h}_{z_0}(\tZ)\to\R^2$, a symmetric bilinear form
$q_{z_0}$ on $\R^2$, and a symmetric bilinear form $S_{\eps,z_0}$ on
$\nu^{\mathrm h}_{z_0}(\tZ)$ such that, for all $\xi$ and $w$,
\begin{equation}                                                                    \label{eq:normal-hessian-square}
 \begin{aligned}
 Q_{\eps,z_0}(\xi,w)&=\frac14q_{z_0}(w-L_{z_0}\xi,w-L_{z_0}\xi)+S_{\eps,z_0}(\xi,\xi),\\
 &\text{where }q_{z_0}(w,w)\geq q_*|w|^2\text{ and }
 S_{\eps,z_0}(\xi,\xi)\geq ct^3|\xi|^2.
 \end{aligned}
\end{equation}
The lower bound for $S_{\eps,z_0}$ is proved below and is uniform in $z_0$
and $\eps$ for $0<\eps<\eps_0(t)$.  The vertical quadratic form is
$q_{z_0}(w,w)=
|w_1\iota_{Y(0)}\Omega_t-w_2\iota_{X(0)}\Omega_t|_{g_t}^2$.
It is positive definite: if the displayed one-form
vanishes, the nondegeneracy of $\Omega_t$ gives
$w_1Y(0)-w_2X(0)=0$, hence $w=0$.  In fact, the lower bound $q_t\geq q_*$,
applied to $w_1Y(0)-w_2X(0)$, gives the stated lower bound for $q_{z_0}$.

We derive the other terms, including their normalization.  For $w\neq0$, set
$v=w/|w|=(v_1,v_2)\in S^1$ and
$X_v(\xi)=v_2X(\xi)-v_1Y(\xi)$ and
$Y_v(\xi)=v_1X(\xi)+v_2Y(\xi)$.  Then
$X_v(\xi)\wedge Y_v(\xi)=X(\xi)\wedge Y(\xi)$, and
\[
 \widetilde\Pi(\xi,\eps^{-2}w)
 =\Span\{X_v(\xi),Y_v(\xi)+\eps^{-1}|w|U\}.
\]
Set
$\mathcal K_\eps(\xi)=K_t(\Pi(\xi))-\tfrac{3\eps^2}{4}
\Omega_t(X(\xi),Y(\xi))^2$,
$\mathfrak a_v(\xi)=(\nabla_{X_v(\xi)}^{g_t}\Omega_t)(X_v(\xi),Y_v(\xi))$, and
$q_v(\xi)=|\iota_{X_v(\xi)}\Omega_t|_{g_t}^2$.
The curvature formula gives the exact identity
\[
 \sec_{G_{t,\eps}}\bigl(\widetilde\Pi(\xi,\eps^{-2}w)\bigr)
 =\frac{\mathcal K_\eps(\xi)-|w|\mathfrak a_v(\xi)+(|w|^2/4)q_v(\xi)}
 {1+\eps^{-2}|w|^2}.
\]
At $\xi=0$, one has $\mathcal K_\eps(0)=0$ and
$\mathfrak a_v(0)=0$ for every $v\in S^1$.  Put
$H^{\mathrm h}_{\eps,z_0}
=\tfrac12(\operatorname{Hess}\mathcal K_\eps)_0$ and
$\ell_v=(\dd\mathfrak a_v)_0$.  Because
$X_v(\xi)\wedge Y_v(\xi)=X(\xi)\wedge Y(\xi)$, the identity
$\mathfrak a_v(\xi)=(\nabla_{X_v(\xi)}^{g_t}\Omega_t)(X(\xi),Y(\xi))$ shows that the values of
$\mathfrak a_v$, and hence of $\ell_v$, on $S^1$ extend linearly in $v\in\R^2$.  Therefore
$|w|\ell_{w/|w|}(\xi)$ extends across $w=0$ to a bilinear form
$\Lambda_{z_0}(\xi,w)$.  Its quadratic Taylor term is
\[
 Q_{\eps,z_0}(\xi,w)
 =H^{\mathrm h}_{\eps,z_0}(\xi,\xi)-\Lambda_{z_0}(\xi,w)
   +\frac14q_{z_0}(w,w).
\]
Since $q_{z_0}$ is positive definite, there is a unique linear map $L_{z_0}$
such that $q_{z_0}(L_{z_0}\xi,w)=2\Lambda_{z_0}(\xi,w)$.  Set
$S_{\eps,z_0}=H^{\mathrm h}_{\eps,z_0}
-\tfrac14q_{z_0}(L_{z_0}\,\cdot,L_{z_0}\,\cdot)$.  Completing the square now gives
\eqref{eq:normal-hessian-square}.
It remains to prove the asserted lower bound for $S_{\eps,z_0}$.  Apply
\eqref{eq:connection-lower-bound} to the curve
$\tau\mapsto\Pi(\tau\xi)$ in the chosen normal coordinates on
$\Gr_2(TB)$.  For each fixed $v\in S^1$, use the frame
$X_v(\tau\xi),Y_v(\tau\xi)$.  Since $\xi$ ranges over the chosen complement
of $T\cZ$, compactness and uniform equivalence of the auxiliary metrics give
$\dist(\Pi(\tau\xi),\cZ)^2\geq c\tau^2|\xi|^2+o(\tau^2)$.
After division by $\tau^2$ and passage to the limit, estimate
\eqref{eq:connection-lower-bound}, uniformly for $v\in S^1$, gives
\[
 H^{\mathrm h}_{\eps,z_0}(\xi,\xi)
 -\frac{\ell_v(\xi)^2}{q_v(0)}
 \geq ct^3|\xi|^2.
\]
The Rayleigh quotient formula is
$\tfrac14q_{z_0}(L_{z_0}\xi,L_{z_0}\xi)
=\sup_{v\in S^1}\ell_v(\xi)^2/q_v(0)$.
Taking the supremum in this inequality proves
$S_{\eps,z_0}(\xi,\xi)\geq ct^3|\xi|^2$.  Compactness makes this uniform on $\tZ$.  The
coefficients of the rescaled quadratic polynomial are uniformly bounded for
fixed small $t$, and the lower bound for $q_{z_0}$ is uniform.  Hence the linear change
of variables $(\xi,w)\mapsto(\xi,w-L_{z_0}\xi)$ in
\eqref{eq:normal-hessian-square} and its inverse have operator norm bounded
independently of $z_0$ and $\eps$.  These bounds apply to the rescaled
variable $w=\eps^2\vartheta$; no bound for the same change expressed in
$\vartheta$ is needed.
Consequently, there is $\mu_t>0$, independent of $z_0$ and $\eps$ for
$0<\eps<\eps_0(t)$, such that
$Q_{\eps,z_0}(\xi,w)\geq \mu_t(|\xi|^2+|w|^2)$.
Let $H_\eps$ denote the restriction along $\tZ$ of the Hessian of the
sectional curvature function, expressed in the fixed normal splitting.  Then
$H_\eps((\xi,\vartheta),(\xi,\vartheta))
=2Q_{\eps,z_0}(\xi,\eps^2\vartheta)$.  Therefore
\begin{equation}                                                                    \label{eq:normal-hessian-lower-bound}
 H_\eps((\xi,\vartheta),(\xi,\vartheta))
 \geq \mu_t\bigl(|\xi|^2+\eps^4|\vartheta|^2\bigr).
\end{equation}
All norms in \eqref{eq:normal-hessian-lower-bound} are the fixed auxiliary norms.
The right-hand side is positive for every nonzero normal vector.  Thus
$H_\eps$ is positive definite, and $\tZ$ is a nondegenerate critical
submanifold of the sectional curvature function.
This proves parts~(1) and~(2) of Theorem~\ref{thm:construction}.

%% file: sections/perturbation-tensor.tex
\section{Construction of the complex-valued symmetric tensor}           \label{sec:perturbation-tensor}

We now construct the tensor used to perturb the connection metric.  We use the
quotient model $P=(S^3\times S^3)/S^1$, where $e^{i\lambda}$ acts by
$e^{i\lambda}\cdot(u,v)=(e^{i\lambda}u,e^{-i\lambda}v)$.
The projection to $B$ is induced by the two Hopf maps.  We use the
convention $\pi_{\mathrm H}(u_1,u_2)=
(2\operatorname{Re}(\bar u_1u_2),2\operatorname{Im}(\bar u_1u_2),
|u_1|^2-|u_2|^2)$.
With the standard orientation on $S^2$, and with the Hermitian product
linear in its second argument, the Hopf connection
$\eta=-i\langle u,\dd u\rangle_{\mathbb C^2}$ satisfies
$\eta(iu)=1$ and $\dd\eta=\tfrac12\pi_{\mathrm H}^*\omega$, where
$\omega$ is the area form of the oriented unit sphere.  These conventions
fix both the sign of the Chern class and the sign in the connection used below.

For $u\in S^3\subset\mathbb C^2$, define the complex-valued one-form
$\zeta_u(\dot u)=u_1\dot u_2-u_2\dot u_1
=\det_{\mathbb C}(u,\dot u)$.
Its kernel on $T_u S^3$ is exactly the vertical line $\R\,iu$ of the Hopf
fibration, and it satisfies the transformation law
$\zeta_{e^{i\lambda}u}(e^{i\lambda}\dot u)
=e^{2i\lambda}\zeta_u(\dot u)$.
Moreover, $\zeta_{Au}(A\dot u)=\zeta_u(\dot u)$ for every
$A\in\mathrm{SU}(2)$.
Thus $\zeta$ is $\mathrm{SU}(2)$-invariant, while scalar multiplication by
$e^{i\lambda}$ multiplies it by $e^{2i\lambda}$.  Let $\eta_j$ and $\zeta_j$
denote the pullbacks of $\eta$ and $\zeta$, respectively, from the $j$th
factor of $S^3\times S^3$.
We normalize the symmetrized tensor product by
$\beta\odot\gamma=\tfrac12(\beta\otimes\gamma+\gamma\otimes\beta)$ and set
$\Psi=\zeta_1\odot\zeta_2$.
The transformation law for $\zeta$ shows that $\Psi$ is invariant under the
circle action defining the quotient.  It annihilates the tangent direction
to that action,
so it descends to a smooth complex-valued symmetric $2$-tensor on $P$.  The
principal $S^1$-action on $P$ is
$[u,v]\cdot z=[zu,v]$ for $z\in S^1$.  If $V$ denotes its fundamental vector
field, then $\Psi(V,\cdot)=0$ and $\Lie_V\Psi=2i\Psi$.
We next modify $\Psi$ so that its restriction to each zero-curvature plane
has the required form.  At
$(p_1,p_2)\in B$, set $x=\langle p_1,p_2\rangle$.  Define
$\mathcal T_{12}:T_{p_1}S^2\to T_{p_2}S^2$ and
$\mathcal T_{21}:T_{p_2}S^2\to T_{p_1}S^2$ by
$\mathcal T_{12}w=xw-\langle w,p_2\rangle p_1$ and
$\mathcal T_{21}w=xw-\langle w,p_1\rangle p_2$, respectively.
These are smooth, polynomial, diagonally $\mathrm{SO}(3)$-equivariant bundle
maps, including when $p_1=\pm p_2$.  On $TB$, define
$\mathscr K(w_1,w_2)=(\mathcal T_{21}w_2,\mathcal T_{12}w_1)$ and
$\mathscr J_0(w_1,w_2)=(-\mathcal T_{21}w_2,\mathcal T_{12}w_1)$.
On the zero-curvature plane $\Pi_b$, one has
$\mathscr K E_b=F_b$, $\mathscr K F_b=E_b$,
$\mathscr J_0E_b=F_b$, and $\mathscr J_0F_b=-E_b$.  Put
$c_t=(1+2t)^{-1}$, $m_+=E_b+F_b$, and $m_-=F_b-E_b$.  The metric $g_t$
satisfies $|m_+|_{g_t}^2=c_t$, $|m_-|_{g_t}^2=1$, and
$\langle m_+,m_-\rangle_{g_t}=0$.
Define the endomorphism
$\mathscr D_t=\tfrac12(\sqrt{c_t}+c_t^{-1/2})\Id
+\tfrac12(\sqrt{c_t}-c_t^{-1/2})\mathscr K$, and let
$\mathscr J_t=\mathscr J_0\mathscr D_t$.  The corresponding ordered
$g_t$-orthonormal basis of $\Pi_b$ is
$e_1=c_t^{-1/2}m_+$, $e_2=m_-$; denote its dual coframe by
$e^1,e^2$.  In this basis, $\mathscr J_t e_1=e_2$ and
$\mathscr J_t e_2=-e_1$.
When a base vector is used as an argument of a tensor on $P$, we mean its
$\theta_t$-horizontal lift and suppress the lift symbol.  We likewise regard
a base covector as a covector on the horizontal distribution and extend it by
zero on $V$.  Whenever one of the base endomorphisms acts on $TP$, we use its
horizontal lift through $\dd\pi|_{\ker\theta_t}$ and extend it by zero on the
vertical line $\R V$; the same symbol denotes this extension.
Define
\[
 \begin{aligned}
 \widehat\Psi_t(X,Y)&=\Psi(X,Y)-\frac{t}{2(1+t)}
 \bigl(\Psi(\mathscr KX,Y)+\Psi(X,\mathscr KY)\bigr),\\
 \mathcal Q_t(X,Y)&=\widehat\Psi_t(X,Y)+\frac i2
 \bigl(\widehat\Psi_t(\mathscr J_tX,Y)
       +\widehat\Psi_t(X,\mathscr J_tY)\bigr).
 \end{aligned}
\]
The tensor $\Psi$ is already defined on $P$, while $\mathscr K$ and
$\mathscr J_t$ are base endomorphisms whose horizontal lifts are globally
defined.  Hence $\mathcal Q_t$ is a smooth complex-valued symmetric $2$-tensor
on all of $P$.  Because the lifted endomorphisms vanish on $V$ and are
invariant under the principal circle action, $\mathcal Q_t$ is horizontal
and satisfies $\Lie_V\mathcal Q_t=2i\mathcal Q_t$.
\begin{proposition}                                                 \label{prop:tensor-on-zero-curvature-planes}
For every $\widetilde\Pi\in\tZ$ lying over $\Pi_b$, including those based at
points with $p_1=\pm p_2$,
$\mathcal Q_t|_{\widetilde\Pi}=\delta(e^1-ie^2)\otimes(e^1-ie^2)$ for a complex number
$\delta\neq0$.  The quantity $|\delta|$ has a
positive uniform lower bound on $\tZ$.  Moreover, $\mathcal Q_t$ is
parallel for the induced flat metric on every local horizontal lift of
$T_b$.  More precisely, if $\Sigma_b$ is such a lift, then
$\nabla^{\Sigma_b}(\mathcal Q_t|_{\Sigma_b})=0$, where
$\nabla^{\Sigma_b}$ is the intrinsic covariant derivative.
\end{proposition}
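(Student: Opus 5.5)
The first step is to reduce the claimed form to a pointwise statement about the restriction of $\widehat\Psi_t$ to $\widetilde\Pi$. For a complex symmetric bilinear form $S$ on $\widetilde\Pi$, consider the operator $S\mapsto S+\tfrac i2\bigl(S(\mathscr J_t\cdot,\cdot)+S(\cdot,\mathscr J_t\cdot)\bigr)$, using the basis $e_1,e_2$ in which $\mathscr J_te_1=e_2$ and $\mathscr J_te_2=-e_1$. A direct computation gives $(e^1-ie^2)\circ\mathscr J_t=-i(e^1-ie^2)$ and $(e^1+ie^2)\circ\mathscr J_t=i(e^1+ie^2)$. Hence this operator acts on $(e^1\mp ie^2)^{\otimes2}$ and on $e^1\otimes e^1+e^2\otimes e^2$ by the respective factors $2$, $0$, and $1$. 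Consequently $\mathcal Q_t|_{\widetilde\Pi}=\delta(e^1-ie^2)^{\otimes2}$ holds, with $2\delta$ equal to the $(e^1-ie^2)^{\otimes 2}$-coefficient of $\widehat\Psi_t|_{\widetilde\Pi}$, exactly when $\widehat\Psi_t|_{\widetilde\Pi}$ is $g_t$-trace-free. I will verify that the term involving $\mathscr K$ in $\widehat\Psi_t$ is designed to produce this trace-freeness. Since $\mathscr K m_\pm=\pm m_\pm$ on $\Pi_b$, the correction rescales $\Psi(m_+,m_+)$ by $1-\tfrac{t}{1+t}$, leaves $\Psi(m_-,m_-)$ fixed up to the sign change $1+\tfrac{t}{1+t}$, and leaves $\Psi(m_+,m_-)$ unchanged. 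The trace condition reads $c_t^{-1}\widehat\Psi_t(m_+,m_+)+\widehat\Psi_t(m_-,m_-)=0$, and it should reduce to the identity $\Psi(m_+,m_+)=-\Psi(m_-,m_-)$ for the undeformed tensor on the horizontal lift.

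Next I will compute $\Psi=\zeta_1\odot\zeta_2$ on the $\theta_t$-horizontal lifts of $E_b$ and $F_b$. The lift of $(e,0)$ to $S^3\times S^3$ is a horizontal lift for $\eta_1$ in the first factor, plus a vertical correction proportional to $\tfrac t2\alpha$. Because $\zeta_j$ kills the Hopf fibre directions, this correction drops out, so $\zeta_1(E_b)$ and $\zeta_2(F_b)$ are the standard values, while $\zeta_1(F_b)=0=\zeta_2(E_b)$. Thus $\Psi(E_b,E_b)=\Psi(F_b,F_b)=0$ and $\Psi(E_b,F_b)=\tfrac12\zeta_1(E_b)\zeta_2(F_b)\ne0$. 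From this, $\Psi(m_+,m_+)=-\Psi(m_-,m_-)=\zeta_1(E_b)\zeta_2(F_b)$, which gives the trace-freeness. It then yields an explicit $\delta$ as a nonzero universal function of $t$ times $\zeta_1(\widehat E)\zeta_2(\widehat F)$, where $|\zeta(\dot u)|=|\dot u|$ for horizontal unit $\dot u$. This makes $|\delta|$ a positive constant depending only on $t$, including at points with $p_1=\pm p_2$, where $\mathcal T_{12}$ and $\mathcal T_{21}$ degenerate to $\pm\Id$ but remain smooth. If the vertical contribution turns out not to drop out so cleanly, I will fall back on symmetry. The diagonal $\mathrm{SU}(2)$ and the principal $S^1$ preserve everything up to phase. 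The reflection swapping $p_1$ and $p_2$ reduces $|\delta|$ to a continuous function of the single angle between $p_1$ and $p_2$ in $b^\perp$, and compactness then gives the uniform lower bound.

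For the parallelism assertion, I will use angular coordinates $(\phi_1,\phi_2)$ on $T_b$ and lift them to $\Sigma_b$. This lift is integrable because $\dd\theta_t=\pi^*\Omega_t$ and $\Omega_t|_{T_b}=0$ by Lemma~\ref{lem:first-order-cancellation}. In these coordinates $e_1,e_2$ are constant combinations of $\partial_{\phi_1},\partial_{\phi_2}$, and the induced metric has constant coefficients. Since $\mathcal Q_t|_{\Sigma_b}=\delta(e^1-ie^2)^{\otimes 2}$ at every point, parallelism is equivalent to $\delta$ being constant along $\Sigma_b$. To prove this, I will write an explicit horizontal curve in $S^3\times S^3$ covering $\Sigma_b$. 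One way is to rotate $p_1$ and $p_2$ about $b$ by unit quaternions $e^{k\phi_j/2}$ (with $k$ corresponding to $b$) and then correct by the Hopf phase $e^{i\lambda(\phi)}$ needed for $\theta_t$-horizontality. Under this parametrization $\zeta_j$ picks up the factor $e^{2i\lambda_j}$. Since $\alpha|_{T_b}=0$, only $\eta_j$ determines $\lambda$, and along each equator the Hopf holonomy phase should be exactly cancelled by the phase of $\zeta_j(\partial_{\phi_j})$, so that the product $\zeta_1\zeta_2$ is constant.

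I expect this phase bookkeeping, including the choice of sign convention in the quotient $(u,v)\sim(e^{i\lambda}u,e^{-i\lambda}v)$, to be the main obstacle. I would first check it in the explicit model $b=(0,0,1)$, where the equator is $|u_1|=|u_2|$ and the horizontal great circle is $u=(e^{-i\phi/2},e^{i\phi/2})/\sqrt2$. In this model $\zeta(\dot u)=u_1\dot u_2-u_2\dot u_1=i/2$ is visibly constant, and I would then transport the conclusion to general $b$ by $\mathrm{SU}(2)$-invariance.
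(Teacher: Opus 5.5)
Your proposal is correct and follows essentially the paper's route. You compute $\Psi(E_b,E_b)=\Psi(F_b,F_b)=0\neq\Psi(E_b,F_b)$ on the lifts, note that the $\mathscr K$-correction makes $\widehat\Psi_t|_{\widetilde\Pi}$ trace-free so that the $\mathscr J_t$-term leaves only the $(e^1-ie^2)^{\otimes 2}$ component, and prove parallelism through the explicit horizontal lift $2^{-1/2}(e^{-is/2},e^{is/2})$ with $\zeta=i/2$ for $b=e_3$, transported by $\mathrm{SU}(2)$. Two minor points: your identity $|\zeta(\dot u)|=|\dot u|$ makes $|\delta|$ an explicit constant instead of relying on the paper's compactness argument, and you have a small slip, since $\delta$ is twice, not half, the $(e^1-ie^2)^{\otimes 2}$-coefficient of $\widehat\Psi_t|_{\widetilde\Pi}$ (your operator multiplies that component by $2$).
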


\begin{proof}
Let $\sigma=\Psi(E_b,F_b)$.
Let $\widetilde E_b$ and $\widetilde F_b$ be their respective lifts to the two
$S^3$ factors that are horizontal for the Hopf connections.  These lifts are
nonzero.  Since the kernel of each $\zeta_i$ is the vertical direction of the
Hopf fibration,
$\zeta_1(\widetilde E_b)\neq0$ and $\zeta_2(\widetilde F_b)\neq0$.  The
vectors $E_b$ and $F_b$ lie in different sphere factors, so
$\sigma=\tfrac12\zeta_1(\widetilde E_b)\zeta_2(\widetilde F_b)\neq0$.
In the basis $E_b,F_b$, the matrix of $\Psi|_{\widetilde\Pi}$ is
\[
 \begin{pmatrix}0&\sigma\\ \sigma&0\end{pmatrix},
\]
whereas the symmetric form
$\tfrac12\{\Psi(\mathscr K\,\cdot,\cdot)+\Psi(\cdot,\mathscr K\,\cdot)\}$
has matrix $\sigma\Id$.  After changing to the orthonormal
basis $e_1,e_2$, one obtains
\[
 \widehat\Psi_t|_{\widetilde\Pi}=\delta\begin{pmatrix}1&0\\0&-1\end{pmatrix}.
\]
Here $\delta=2(1+2t)\sigma/(1+t)\neq0$.
The defining identities for $\mathscr J_t$ show that the symmetric form
$\tfrac12\{\widehat\Psi_t(\mathscr J_t\,\cdot,\cdot)
+\widehat\Psi_t(\cdot,\mathscr J_t\,\cdot)\}$ has matrix
$-\delta\begin{psmallmatrix}0&1\\1&0\end{psmallmatrix}$.
This proves the formula in the proposition.  The bundle maps $\mathcal T_{12}$,
$\mathcal T_{21}$, $\mathscr K$, and $\mathscr J_t$ are smooth at
$p_1=\pm p_2$, and the calculation uses no division by
$|p_1\times p_2|$; hence the same formula applies there.  The
continuous positive function $|\delta|$ has a positive lower bound on the
compact set $\tZ$.

It remains to prove that the restriction is parallel.  With the Hopf
convention fixed above, the
connection chosen in Section~\ref{sec:connection} satisfies
$\theta_0=\eta_1+\eta_2$ and
$\theta_t=\theta_0+\tfrac t2\pi^*\alpha$, and it has curvature
$\dd\theta_t=\pi^*\Omega_t$.  Indeed, $\theta_0$ annihilates the infinitesimal
generator $(iu,-iv)$ of the circle action defining the quotient, evaluates to
one on the generator of the principal circle action represented by $(iu,0)$,
and therefore descends to the connection form on $P$ normalized by
$\theta_0(V)=1$.  Since $\alpha|_{T_b}=0$, the
$\theta_t$-horizontal lifts of vectors tangent to $T_b$ coincide with
their $\theta_0$-horizontal lifts.  Moreover,
$\Omega_t|_{T_b}=0$ implies
$\theta_t([X^{\mathrm H},Y^{\mathrm H}])=-\Omega_t(X,Y)=0$ for tangent fields $X,Y$ on
$T_b$.  Thus the horizontal distribution restricted over $T_b$ is involutive
and admits local integral surfaces.

By diagonal $\mathrm{SO}(3)$-equivariance, take $b=e_3$ and use the equator
$p(s)=(\cos s,\sin s,0)$ with its lift horizontal for the Hopf connection,
$\widetilde p(s)=2^{-1/2}(e^{-is/2},e^{is/2})$.
A direct computation gives
$\eta(\dot{\widetilde p})=0$ and
$\zeta_{\widetilde p}(\dot{\widetilde p})=i/2$.
Hence $\sigma$, and therefore $\delta$, is constant in the two product
coordinates on a local horizontal lift.  In those
coordinates the restriction of $g_t$ to $T_b$ has the constant matrix
\[
 \frac14
 \begin{pmatrix}
  1+c_t&c_t-1\\
  c_t-1&1+c_t
 \end{pmatrix}.
\]
Thus the coordinate fields, and hence $e_1,e_2$, are parallel.  The product
lift $(\widetilde p(s_1),\widetilde p(s_2))$ is a local integral surface of
the horizontal distribution for $b=e_3$.  For general $b$, choose an element
of $\mathrm{SU}(2)$ covering a
rotation that sends $e_3$ to $b$.  The $\mathrm{SU}(2)$-invariance of
$\zeta$, the $\mathrm{SO}(3)$-equivariance of $\mathscr K,\mathscr J_t$, and the
invariance of $\theta_t$ show that the same identities hold on the
corresponding local lift.  The formula in the proposition then shows that the restriction of
$\mathcal Q_t$ is parallel.
\end{proof}

On every plane $\widetilde\Pi\in\tZ$, the real part of the formula in the proposition is a
nonzero trace-free symmetric form with eigenvalues
$\pm|\delta|$.
This proves part~(3) of Theorem~\ref{thm:construction}.

%% file: sections/perturbation.tex
\section{Curvature variation along the zero-curvature planes}                      \label{sec:perturbation}

Set $h=\operatorname{Re}\mathcal Q_t$ and $G_s=G_{t,\eps}+s h$.
The tensor $h$ is horizontal, so $h(U,\cdot)=0$.  For sufficiently small
$|s|$, the tensor $G_s$ is a Riemannian metric.
Throughout this section, $\nabla=\nabla^{G_{t,\eps}}$, and unmarked inner
products and norms are taken with respect to $G_{t,\eps}$.

Fix $\widetilde\Pi\in\tZ$ with $\widetilde\Pi\subset T_pP$, and let
$\Sigma_b$ be a local horizontal lift through $p$ such that
$T_p\Sigma_b=\widetilde\Pi$.  By
$\Omega_t|_{T_b}=0$, \eqref{eq:connection-koszul}, and the
total geodesy of $T_b$, the surface $\Sigma_b$ is flat and totally
geodesic for $G_{t,\eps}$.
Proposition~\ref{prop:tensor-on-zero-curvature-planes}
shows that the metrics induced by $G_s$ on $\Sigma_b$ have constant
coefficients in a parallel frame and are flat for all sufficiently small
$|s|$.

Let $\II_s(X,Y)=(\nabla_X^{G_s}Y)^{\perp_s}$ be the second fundamental form
of this fixed surface in $(P,G_s)$, and let
$\dot\II=\left.\partial_s\II_s\right|_{s=0}$.
For any unit normal $N$, write
$\dot\II^N=\langle\dot\II,N\rangle$.
Since $\II_0=0$, differentiating the Gauss equation identifies the first
variation of ambient sectional curvature with the first variation of the
intrinsic Gaussian curvature, as in \cite[Lemma~4.1]{Strake1987}.  The
induced metrics are flat, so both variations vanish pointwise.  The
second-order contribution below comes from differentiation along the
$S^1$-orbits.

For $X,Y\in T\Sigma_b$, the variation formula for the Levi-Civita
connection gives
\[
 2\langle\dot\II(X,Y),U\rangle
 =(\nabla_X h)(Y,U)+(\nabla_Y h)(X,U)-(\nabla_U h)(X,Y).
\]
Use $S^1$-invariant extensions, so that $[U,X]=[U,Y]=0$.  Since
$h(U,\cdot)=0$, the equivariance relation $\Lie_V\mathcal Q_t=2i\mathcal Q_t$
and $U=\eps^{-1}V$ imply
$\dot\II^U=-\tfrac12\Lie_U h
=-\eps^{-1}\operatorname{Re}(i\mathcal Q_t)$.
It follows from Proposition~\ref{prop:tensor-on-zero-curvature-planes} that
$\dot\II^U$ is parallel and trace-free on $\Sigma_b$, with
eigenvalues $\pm|\delta|/\eps$.
If $N$ is a horizontal unit normal to $\Sigma_b$, then
\[
 2\dot\II^N(X,Y)
 =(\nabla_X h)(Y,N)+(\nabla_Y h)(X,N)-(\nabla_N h)(X,Y).
\]
Here $X$, $Y$, and $N$ are horizontal.
The connection formulas \eqref{eq:connection-koszul} and the smoothness of
$h$ on the fixed principal bundle therefore yield
$|\dot\II^N|\leq C$ independently of small $\eps$.

The intrinsic curvature of $\Sigma_b$ is zero for every $s$.
For a $G_s$-orthonormal normal frame $\{N_\alpha(s)\}$, write
$\II_s^\alpha=\langle\II_s,N_\alpha(s)\rangle_{G_s}$, and at $s=0$
write $\dot\II^\alpha
=\langle\dot\II,N_\alpha(0)\rangle$.
Let $e_1,e_2$ be the fixed $G_{t,\eps}$-orthonormal basis of the
zero-curvature plane, set
$\dot\II_{ij}^\alpha=\dot\II^\alpha(e_i,e_j)$, and compute determinants of
restricted metrics in this basis.
With our curvature convention, the Gauss equation gives
\[
 \sec_{G_s}(T_p\Sigma_b)
 =\frac{\sum_\alpha\{\II_s^\alpha(e_1,e_2)^2
 -\II_s^\alpha(e_1,e_1)\II_s^\alpha(e_2,e_2)\}}
 {\det(G_s|_{\widetilde\Pi})}.
\]
Since $\II_0=0$, one has
$\II_s^\alpha(e_i,e_j)=s\dot\II_{ij}^\alpha+O(s^2)$ and
$\det(G_s|_{\widetilde\Pi})=1+O(s)$.  Set
$a_\eps(\widetilde\Pi):=\sum_\alpha((\dot\II_{12}^\alpha)^2
-\dot\II_{11}^\alpha\dot\II_{22}^\alpha)$.  The eigenvalue formula for
$\dot\II^U$ and the uniform bounds for $\dot\II^N$ with horizontal $N$ give
\begin{equation}                                                                    \label{eq:a-epsilon-lower}
 \sec_{G_s}(T_p\Sigma_b)=s^2a_\eps(\widetilde\Pi)+O(s^3)
 \;\text{and}\; a_\eps(\widetilde\Pi)\geq\frac{\kappa}{\eps^2}-C_0.
\end{equation}
Here $\kappa>0$ and $C_0>0$ are uniform on $\tZ$.  Indeed, the trace-free form
$\dot\II^U$ has determinant $-|\delta|^2/\eps^2$, while the contributions from
horizontal normal vectors are uniformly bounded.

\section{Uniform control of nearby planes}                                         \label{sec:nearby-planes}

Let $K(s,\Pi)=\sec_{G_s}(\Pi)$ on the Grassmann bundle, equipped with
the $\eps$-independent auxiliary metric fixed in
Section~\ref{sec:connection}.  Use the tubular coordinates
$(z,n)$, with
$n=(\xi,\vartheta)\in\nu^{\mathrm h}_z(\tZ)\oplus
\nu^{\mathrm v}_z(\tZ)$ in the fixed normal splitting from
Section~\ref{sec:connection}.  For every $z\in\tZ$, one has
$K(0,z,0)=0$ and $\dd_n K(0,z,0)=0$; the second identity holds because
$\tZ$ lies in the critical set, as shown in Section~\ref{sec:connection}.
The first variation calculation above also gives
$\partial_s K(0,z,0)=0$.  Set
$r_\eps(z)=\dd_n(\partial_s K)(0,z,0)$.  Taylor expansion now gives
\begin{equation}                                                                    \label{eq:tubular-expansion}
 K(s,z,n)
 = \frac12H_\eps(z)(n,n)
 +s\,r_\eps(z)(n)
 +s^2a_\eps(z) + O\bigl(|n|^3+|s||n|^2+s^2|n|+|s|^3\bigr).
\end{equation}
Here $H_\eps$ is the restriction of the Hessian to $\nu(\tZ)$, as defined in
Section~\ref{sec:connection}, and $|n|$ is the fixed auxiliary norm.  The
implicit constants are uniform in $z\in\tZ$ after $t$ and $\eps$ have been
fixed; no uniform remainder bound as $\eps\to0$ is used.  Split the normal
covector
$r_\eps=(r_{\eps,\mathrm h},r_{\eps,\mathrm v})$ according to the fixed orthogonal splitting
$\nu(\tZ)=\nu^{\mathrm h}(\tZ)\oplus\nu^{\mathrm v}(\tZ)$.

\begin{lemma}                                                                       \label{lem:first-variation-estimate}
For fixed sufficiently small $t$, uniformly on $\tZ$, one has
$|r_{\eps,\mathrm h}|\leq C$ and $|r_{\eps,\mathrm v}|\leq C\eps^2$.
\end{lemma}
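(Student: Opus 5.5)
The plan is to compute $r_\eps$ directly from the standard first-variation formula for the $(0,4)$ curvature tensor,
\[
 2\,\dot{\Rm}(X,Y,Z,W)=\nabla^2_{Y,Z}h(X,W)+\nabla^2_{X,W}h(Y,Z)-\nabla^2_{X,Z}h(Y,W)-\nabla^2_{Y,W}h(X,Z)+(\text{terms }\Rm\ast h),
\]
with $\nabla=\nabla^{G_{t,\eps}}$. Write $\partial_sK=\dot{\Rm}(X,Y,Y,X)/|X\wedge Y|^2-K\,\partial_s|X\wedge Y|^2/|X\wedge Y|^2$. Along $\tZ$ both $K$ and $\partial_sK$ vanish. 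Hence the normal derivative of the second term only involves $\dd_nK=0$, and $r_\eps$ is the normal derivative of the first term alone. The whole proof is then bookkeeping of the powers of $\eps$ in $h$, $\nabla h$ and $\nabla^2h$. For this I use the connection formulas \eqref{eq:connection-koszul}, the relations $h(U,\cdot)=0$ and $\Lie_V\mathcal Q_t=2i\mathcal Q_t$, and $U=\eps^{-1}V$.

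\textbf{Step 1: pointwise size of $\nabla h$ and $\nabla^2h$.} Take $X,Y,Z$ horizontal and $S^1$-invariant. The horizontal part of $\nabla_XY$ is $\eps$-independent, and its $U$-component is killed by $h$. Hence $(\nabla_Xh)(Y,Z)$ is $O(1)$. Also $(\nabla_Xh)(Y,U)=-h(Y,\nabla_XU)=O(\eps)$. In contrast, $(\nabla_Uh)(X,Y)=\eps^{-1}\operatorname{Re}(2i\mathcal Q_t)(X,Y)+O(\eps)$ is large. Iterating once more gives the following bounds for second derivatives:
\begin{itemize}
\item with all slots horizontal: $O(1)$, plus terms of the form $\tfrac\eps2\Omega_t(\cdot,\cdot)\,(\nabla_Uh)=O(1)\cdot\Omega_t$;
\item with exactly one $U$ slot in the $h$-argument: $O(\eps)$, plus contributions $\Omega_t(\cdot,\cdot)\cdot O(1)$;
\item with $U$ as a differentiation slot: a leading term $\eps^{-1}\,\partial\operatorname{Re}(2i\mathcal Q_t)$ (a horizontal derivative of $\mathcal Q_t$), plus $O(1)$.
\end{itemize}

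\textbf{Step 2: the horizontal component $r_{\eps,\mathrm h}$.} Here we vary horizontal planes $\Span\{X(\xi),Y(\xi)\}$ in $\eps$-independent coordinates. Only $\dot{\Rm}(X,Y,Y,X)$ with horizontal arguments enters. The ambient curvature terms $\Rm\ast h$ with horizontal slots are $O(1)$ by O'Neill, since $K_t$, $\eps^2\mathfrak b_t^2$ and $\eps\mathfrak a_t$ are all bounded. The $\nabla^2h$ terms are $O(1)$ by Step 1. Smoothness in $\xi$ then gives $|r_{\eps,\mathrm h}|\le C$.

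\textbf{Step 3: the vertical component $r_{\eps,\mathrm v}$.} Here $\widetilde\Pi(0,\vartheta)=\Span\{X+\eps\vartheta_1U,\;Y+\eps\vartheta_2U\}$. Hence $\partial_\vartheta$ of the numerator equals $\eps$ times a combination of $\dot{\Rm}(X,Y,Y,U)$ and $\dot{\Rm}(X,Y,X,U)$, evaluated at a point of $\tZ$. So it suffices to show that these mixed terms are $O(\eps)$, and this is the main obstacle. The $\Rm\ast h$ terms are fine: they involve $\Rm(X,Y,\cdot,U)=-\tfrac\eps2\mathfrak a_t=0$ on $\tZ$, or $\Rm(\cdot,U,U,\cdot)=O(\eps^2)$, or an $h(U,\cdot)=0$ factor. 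Among the $\nabla^2h$ terms, those with $U$ inside $h$ are $O(\eps)$ up to $\Omega_t(X,Y)$-terms, and $\Omega_t(X,Y)=\mathfrak b_t=0$ on $\tZ$. The dangerous terms are those with $U$ as a differentiation slot, such as $\nabla^2_{X,U}h(Y,Y)-\nabla^2_{Y,U}h(X,Y)$. Their leading parts are $\eps^{-1}$ times tangential horizontal derivatives of $\operatorname{Re}(2i\mathcal Q_t)$ along $\widetilde\Pi$. After commuting derivatives (the commutator costs $\Rm\ast h$, which is controlled as above), I would write these leading terms as derivatives along the local horizontal lift $\Sigma_b$, which is totally geodesic with $T_p\Sigma_b=\widetilde\Pi$. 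By Proposition~\ref{prop:tensor-on-zero-curvature-planes}, $\mathcal Q_t|_{\Sigma_b}$ is parallel. Hence the tangential covariant derivatives of $\mathcal Q_t$, restricted to tangent arguments, vanish at $p$, so the $\eps^{-1}$ contributions cancel. What remains is $O(1)$, and after multiplication by the factor $\eps$ it is $O(\eps)$. I expect the delicate point to be verifying that no $\eps^{-1}$ term survives through a normal component, for instance a term $(\nabla_X\mathcal Q_t)(Y,N)$ with $N$ normal to $\Sigma_b$; I would check this using the explicit form of the second-derivative formula and the fact that only tangential arguments $Y,Y$ or $X,Y$ appear in the $U$-differentiated terms. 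Granting this, $\dot{\Rm}(X,Y,\cdot,U)=O(\eps)$, and therefore $|r_{\eps,\mathrm v}|\le\eps\cdot C\eps=C\eps^2$ in the $G_{t,1}$-norm. Uniformity on $\tZ$ follows from compactness and $S^1$- and $\mathrm{SO}(3)$-equivariance.
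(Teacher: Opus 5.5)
\textbf{Step 3 (vertical component) is correct.} Your bound for $r_{\eps,\mathrm v}$ uses the same inputs as the paper: total geodesy of $\Sigma_b$, parallelism of $\mathcal Q_t|_{\Sigma_b}$, and $\Lie_V\mathcal Q_t=2i\mathcal Q_t$. Only the packaging differs.

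The paper differentiates the Codazzi equation of $\Sigma_b$ to get $\dot\Rm(X,Y,Z,U)=\langle(\nabla^\perp_X\dot\II)(Y,Z)-(\nabla^\perp_Y\dot\II)(X,Z),U\rangle$. Since $\dot\II^U=-\tfrac12\Lie_Uh=-\eps^{-1}\operatorname{Re}(i\mathcal Q_t)$ is parallel, only $\langle(\dot\II)^{\mathrm H}(Y,Z),\nabla^\perp_XU\rangle=O(\eps)$ survives. You instead isolate the $\eps^{-1}$ parts of $\nabla^2h$ by hand.

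The point you leave as ``granting this'' is in fact immediate:
\begin{itemize}
\item The tensor identity $\nabla_Vh=\Lie_Vh-h(\nabla_{\cdot}V,\cdot)-h(\cdot,\nabla_{\cdot}V)$ and $\nabla V=O(\eps^2)$ give $\nabla^2_{X,U}h(Y,Z)=\eps^{-1}(\nabla_Xk)(Y,Z)+O(\eps)$, with $k=\operatorname{Re}(2i\mathcal Q_t)$.
\item Total geodesy gives $(\nabla_Xk)(Y,Z)=(\nabla^{\Sigma_b}_X(k|_{\Sigma_b}))(Y,Z)=0$ for tangent $X,Y,Z$, so no normal slot ever appears.
\end{itemize}

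One correction: $\Rm(X,Y,e,U)$ need not vanish on $\tZ$ when $e$ is normal to $\widetilde\Pi$. The term $h(R(X,Y)U,\cdot)$ involves all horizontal $e$. The uniform bound $O(\eps)$ is all you need there.

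\textbf{The gap is in Step~2.} The whole content of the lemma is that $C$ does not depend on $\eps$. ``Smoothness in $\xi$'' bounds the $\xi$-derivative of $\dot\Rm_{p(\xi)}(X(\xi),Y(\xi),Y(\xi),X(\xi))$ only for each fixed $\eps$. Your Step~1 gives pointwise bounds on $h$, $\nabla h$ and $\nabla^2h$, but pointwise $O(1)$ bounds do not control derivatives uniformly in $\eps$.

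Because the footpoint moves, you need $\eps$-uniform bounds on the first horizontal derivatives of the horizontal components of $\dot\Rm$. These involve components of $\nabla^3h$. This is why the paper's Lemma~\ref{lem:tensor-derivatives} is carried to $m=3$ with the $\eps^{q-r}$ count.

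Also, ``only horizontal arguments enter'' is not literally true if you differentiate covariantly. The frame variations acquire vertical parts $-\tfrac\eps2\Omega_t(\dot p,X)U$, which pair with $\dot\Rm(U,Y,Y,X)$. These terms are harmless, since even the crude $O(\eps^{-1})$ bound yields $O(1)$, but they must be accounted for.

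A short way to close the gap is to work in the frame $(X_a,V)$:
\begin{itemize}
\item By \eqref{eq:appendix-frame-connection} with $V=\eps U$, all connection coefficients in this frame are polynomials in $\eps^2$.
\item The components of $h$ and the frame fields themselves are $\eps$-independent.
\item Hence every horizontal component of $\dot\Rm$ is a polynomial in $\eps^2$ with smooth $\eps$-independent coefficients, and so are its horizontal derivatives. This gives the uniform bound on $r_{\eps,\mathrm h}$.
\end{itemize}
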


The proof, including estimates for the first three covariant derivatives of
$\mathcal Q_t$, the linearized curvature formula, and the denominator in the
definition of sectional curvature, is given in
Appendix~\ref{app:tensor-derivatives}.

The lower bound for the normal Hessian gives the following estimate for its
inverse, with the different powers of $\eps$ in the horizontal and vertical
normal directions retained.

\begin{lemma}                                                                       \label{lem:block-inverse}
Let $E=E_{\mathrm h}\oplus E_{\mathrm v}$ be an orthogonal Euclidean splitting,
and let $H_\eps$ be a positive definite symmetric bilinear form on $E$, also
viewed as an isomorphism from $E$ to $E^*$.  Suppose, for $x\in E_{\mathrm h}$
and $y\in E_{\mathrm v}$, that
$H_\eps(x+y,x+y)\geq\mu(|x|^2+\eps^4|y|^2)$.  Then every
$r=(r_{\mathrm h},r_{\mathrm v})\in E^*$ satisfies
\begin{equation}                                                                    \label{eq:block-inverse}
 \langle H_\eps^{-1}r,r\rangle
 \leq\mu^{-1}\bigl(|r_{\mathrm h}|^2+\eps^{-4}|r_{\mathrm v}|^2\bigr).
\end{equation}
The splitting need not be orthogonal with respect to $H_\eps$.
\end{lemma}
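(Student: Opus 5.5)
The plan is to reduce everything to a weighted-norm duality argument.

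\emph{Step 1: rescale to a uniformly coercive form.} Introduce on $E$ the weighted Euclidean norm $\|x+y\|_\eps^2=|x|^2+\eps^4|y|^2$ for $x\in E_{\mathrm h}$, $y\in E_{\mathrm v}$. Because the splitting is orthogonal, this is the norm of the inner product $\langle\!\langle u,v\rangle\!\rangle=\langle D_\eps u,v\rangle$, where $D_\eps=\Id_{E_{\mathrm h}}\oplus\eps^2\Id_{E_{\mathrm v}}$. The hypothesis then says exactly that $H_\eps(u,u)\geq\mu\|u\|_\eps^2$ for every $u\in E$.

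\emph{Step 2: compute the dual norm.} The norm dual to $\|\cdot\|_\eps$ on $E^*$ is
\[
\|r\|_{\eps,*}^2=|r_{\mathrm h}|^2+\eps^{-4}|r_{\mathrm v}|^2 .
\]
This follows from Cauchy--Schwarz applied separately on each orthogonal summand, $r(x+y)=r_{\mathrm h}(x)+(\eps^{-2}r_{\mathrm v})(\eps^2y)$, followed by Cauchy--Schwarz in $\R^2$. Equality is attained at $x=r_{\mathrm h}^\sharp$, $y=\eps^{-4}r_{\mathrm v}^\sharp$, but only the inequality $|r(u)|\leq\|r\|_{\eps,*}\|u\|_\eps$ will be needed.

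\emph{Step 3: bound $\langle H_\eps^{-1}r,r\rangle$.} Set $u=H_\eps^{-1}r$, so that $H_\eps(u,\cdot)=r$ and $\langle H_\eps^{-1}r,r\rangle=r(u)=H_\eps(u,u)$. Combining coercivity with the dual norm bound gives
\[
\mu\|u\|_\eps^2\leq H_\eps(u,u)=r(u)\leq\|r\|_{\eps,*}\|u\|_\eps ,
\]
and hence $\|u\|_\eps\leq\mu^{-1}\|r\|_{\eps,*}$. Therefore
\[
\langle H_\eps^{-1}r,r\rangle=r(u)\leq\|r\|_{\eps,*}\|u\|_\eps\leq\mu^{-1}\|r\|_{\eps,*}^2 ,
\]
which is \eqref{eq:block-inverse}.

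An alternative is to write $\langle H^{-1}r,r\rangle=\sup_u\bigl(2r(u)-H(u,u)\bigr)$ and bound the supremum by $\sup_u\bigl(2\|r\|_{\eps,*}\|u\|_\eps-\mu\|u\|_\eps^2\bigr)$; this gives the same estimate.

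\emph{Where the hypotheses are used.} Symmetry and positive definiteness of $H_\eps$ are what make $H_\eps^{-1}$ well defined and identify $r(u)$ with $H_\eps(u,u)$. The argument never uses orthogonality of the splitting with respect to $H_\eps$, which matches the remark in the statement. Orthogonality of the splitting with respect to the Euclidean structure is used only in Step 2, and only to split the pairing cleanly into its horizontal and vertical parts. I do not expect a genuine obstacle; the one point needing care is this orthogonality in the dual-norm computation.
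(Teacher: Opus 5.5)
Your argument is correct and is essentially the paper's proof: the paper rewrites the hypothesis as $D_\eps^{-1}H_\eps D_\eps^{-1}\geq\mu\Id$ with $D_\eps=\operatorname{diag}(\Id_{E_{\mathrm h}},\eps^2\Id_{E_{\mathrm v}})$ and inverts this operator inequality to get $H_\eps^{-1}\leq\mu^{-1}D_\eps^{-2}$, and your coercivity/dual-norm estimate in Step~3 is a direct proof of that order reversal under inversion. One small slip: with your $D_\eps$, the weighted inner product is $\langle D_\eps^2u,v\rangle=\langle D_\eps u,D_\eps v\rangle$, not $\langle D_\eps u,v\rangle$ (which would weight $y$ by $\eps^2$); this does not affect Steps~2--3, which work directly with $\|\cdot\|_\eps$.
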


\begin{proof}
Let $D_\eps=\operatorname{diag}(\Id_{E_{\mathrm h}},\eps^2\Id_{E_{\mathrm v}})$.  The
hypothesis says that
$D_\eps^{-1}H_\eps D_\eps^{-1}\geq\mu\Id$.  Hence
$H_\eps^{-1}
\leq\mu^{-1}D_\eps^{-2}$, which is \eqref{eq:block-inverse}.
\end{proof}

The normal Hessian estimate \eqref{eq:normal-hessian-lower-bound} and
Lemmas~\ref{lem:first-variation-estimate} and~\ref{lem:block-inverse} imply
pointwise, for $z\in\tZ$,
\[
 \langle H_\eps(z)^{-1}r_\eps(z),r_\eps(z)\rangle
 \leq C\bigl(|r_{\eps,\mathrm h}(z)|^2+\eps^{-4}|r_{\eps,\mathrm v}(z)|^2\bigr)\leq C_1.
\]
Together with \eqref{eq:a-epsilon-lower}, this yields
\begin{equation}                                                                    \label{eq:reduced-coefficient}
 a_\eps(z)
 -\frac12\langle H_\eps(z)^{-1}r_\eps(z),r_\eps(z)\rangle
 \geq\frac{\kappa}{\eps^2}-C_2.
\end{equation}
The constants in the following minimization lemma are uniform over the
compact submanifold $\tZ$.  The same minimization in the normal directions
is used by Brendle and Hung; compare
\cite[Lemma~2.1]{BrendleHung2026}.  We state the
vector-bundle version needed here.  In its application below, the preceding
estimates retain the different $\eps$-dependence of the horizontal and
vertical normal directions.

\begin{lemma}                                                                       \label{lem:tubular-minimization}
Let $E\to Z$ be a Euclidean vector bundle over a compact manifold $Z$, and let
$F(s,z,n)$ be smooth near the zero section of $\R\times E$.  Suppose
$F(0,z,0)=0$, $\dd_n F(0,z,0)=0$, and $\partial_s F(0,z,0)=0$.  Assume that the Hessian
$H_z=\dd_n^2F(0,z,0)$ is positive definite, and
$a_z-\frac12\langle H_z^{-1}r_z,r_z\rangle\geq\gamma>0$,
where $r_z=\dd_n(\partial_s F)(0,z,0)$ and
$a_z=\tfrac12\partial_s^2 F(0,z,0)$.  Then there are $r_0>0$ and $s_0>0$ such that
$F(s,z,n)>0$ for all $z\in Z$, $|n|<r_0$, and $0<|s|<s_0$.
\end{lemma}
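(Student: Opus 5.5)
The idea is to reduce, at each $z$, to an explicit quadratic form in $(s,n)$, complete the square in $n$, and absorb the cubic remainder. Uniformity in $z$ comes from compactness of $Z$ and smoothness of $F$.

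First we expand. By Taylor's theorem with the stated vanishing conditions, on a neighborhood $\{|s|\le s_1,\ |n|\le r_1\}$ of the zero section we have
\[
 F(s,z,n)=\tfrac12H_z(n,n)+s\,r_z(n)+a_zs^2+R(s,z,n),
 \qquad |R|\le C_R\bigl(|n|^3+|s||n|^2+s^2|n|+|s|^3\bigr).
\]
The constant $C_R$ is uniform in $z$ because $Z$ is compact and $F$ is smooth up to a compact neighborhood of the zero section. We work in local trivializations with smoothly varying Euclidean metrics, or equivalently with third derivatives bounded in a fixed connection. By continuity and compactness, $H_z\ge 2\mu\Id$ for some $\mu>0$ uniformly in $z$, and $|r_z|\le C_r$.

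Next we write the quadratic part as a square plus a positive multiple of $s^2$. Put $n_*=-sH_z^{-1}r_z$. Then
\[
 \tfrac12H_z(n,n)+s\,r_z(n)+a_zs^2=\tfrac12H_z(n-n_*,n-n_*)+s^2\bigl(a_z-\tfrac12\langle H_z^{-1}r_z,r_z\rangle\bigr)
 \ge \mu|n-n_*|^2+\gamma s^2 .
\]
It remains to show that this beats $|R|$ for small $|s|$ and $|n|$, with $s\neq0$. Using $|n|\le|n-n_*|+C|s|$, we bound
\[
 |R|\le C'\bigl(|n|+|s|\bigr)\bigl(|n|^2+s^2\bigr)\le C''\bigl(|n|+|s|\bigr)\bigl(|n-n_*|^2+s^2\bigr).
\]
Now choose $r_0$ and $s_0$ so that $C''(r_0+s_0)\le\tfrac12\min(\mu,\gamma)$. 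Then
\[
 F\ge\tfrac12\bigl(\mu|n-n_*|^2+\gamma s^2\bigr)>0
\]
whenever $s\ne0$.

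The main care point is exactly this absorption: the remainder has to be controlled by $|n-n_*|^2+s^2$, not by $|n|^2$ alone. The relation $|n|^2\le 2|n-n_*|^2+Cs^2$ handles this, with all constants uniform over compact $Z$.
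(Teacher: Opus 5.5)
Your proof is correct. It reaches the conclusion by a different, more elementary route than the paper.

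The paper's argument:
\begin{itemize}
\item It applies the implicit function theorem fiberwise, uniformly over $Z$, to obtain the exact critical point $n_*(s,z)$ of $F(s,z,\cdot)$, with $\partial_s n_*(0,z)=-H_z^{-1}r_z$.
\item It forms the reduced function $\varphi(s,z)=F(s,z,n_*(s,z))$ and computes $\tfrac12\partial_s^2\varphi(0,z)=a_z-\tfrac12\langle H_z^{-1}r_z,r_z\rangle$.
\item It then combines a uniform Taylor bound $\varphi\geq\tfrac\gamma2 s^2$ with strong convexity of $F$ on each fiber to get $F\geq\varphi+\tfrac m4|n-n_*|^2$.
\end{itemize}

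Your argument uses only the linear approximation $n_*=-sH_z^{-1}r_z$ of that critical point. You complete the square exactly in the quadratic Taylor polynomial and absorb the cubic remainder through $|n|^2+s^2\leq C(|n-n_*|^2+s^2)$.

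What your route buys:
\begin{itemize}
\item It needs no implicit function theorem.
\item It needs no uniform control of derivatives of $n_*(s,z)$ when bounding $\varphi$.
\item It needs no convexity argument on the fibers.
\item It still gives the same kind of explicit bound, $F\geq\tfrac12(\mu|n-n_*|^2+\gamma s^2)$, with constants read off from $C_R$, $\mu$, $\gamma$, and $\sup_z|H_z^{-1}r_z|$.
\end{itemize}

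What the paper's route buys is a cleaner conceptual link to how the lemma is used. There, $\varphi(s,z)$ is the fiberwise minimum of $F$ near the zero section. So the subtracted term $\tfrac12\langle H_z^{-1}r_z,r_z\rangle$ is literally the loss from minimizing the expansion over nearby planes, as described in Section~\ref{sec:construction}.

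The only bookkeeping you leave implicit is choosing $r_0\leq r_1$ and $s_0\leq s_1$, so that the uniform Taylor estimate applies on the final neighborhood.
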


The lemma is proved in Appendix~\ref{app:tubular-minimization}.

Fix $t\in(0,t_0)$.  By \eqref{eq:reduced-coefficient}, define
$\eps_0(t)>0$ small enough that
$\eps_0(t)^2\leq c_0t^3$ and
$\kappa/\eps^2-C_2>0$ whenever $0<\eps<\eps_0(t)$.  For every such
$\eps$, applying
Lemma~\ref{lem:tubular-minimization} to
\eqref{eq:tubular-expansion} shows that the sectional curvature of $G_s$ is
strictly positive in a tubular neighborhood $\mathcal U$ of $\tZ$ whenever
$0<|s|<s_1(t,\eps)$.  Choose a smaller tubular neighborhood
$\mathcal U'$ with $\overline{\mathcal U'}\subset\mathcal U$.  On the compact
set $\Gr_2(TP)\setminus\mathcal U'$, the background metric $G_{t,\eps}$ has a
positive minimum of sectional curvature.  Continuity therefore preserves
positivity there after decreasing $s_1(t,\eps)$ to a positive number
$s_0(t,\eps)$.  We have proved
$\sec_{G_{t,\eps}+s\operatorname{Re}\mathcal Q_t}(\Pi)>0$ for every
tangent two-plane $\Pi$ and every sufficiently small nonzero $s$.
This proves part~(4) and completes the proof of Theorem~\ref{thm:construction}.

%% file: sections/technical-appendix.tex
\section{Covariant derivatives of the complex-valued symmetric tensor}  \label{app:tensor-derivatives}

Fix $t\in(0,t_0)$.  All constants in this appendix may depend on $t$, but
they are uniform for $0<\eps<\eps_0(t)$.  Throughout the appendix,
$\nabla=\nabla^{G_{t,\eps}}$, and unmarked inner products are taken with
respect to $G_{t,\eps}$.  Over a local trivialization of $P$, choose a smooth
$g_t$-orthonormal frame
$\bar X_1,\ldots,\bar X_4$ on $B$, and let
$X_a=\bar X_a^{\mathrm H}$ be its $S^1$-invariant horizontal lifts.  Put
$U=\eps^{-1}V$.  Then $(X_1,\ldots,X_4,U)$ is
$G_{t,\eps}$-orthonormal and $[U,X_a]=0$.  Let
$\Gamma_{ab}^{c}$ be defined by
$\nabla_{\bar X_a}^{g_t}\bar X_b=\Gamma_{ab}^{c}\bar X_c$, and set
$\Omega_{ab}=\Omega_t(\bar X_a,\bar X_b)$.  We use the Einstein summation
convention for repeated frame indices.  Then
\begin{equation}                                                                    \label{eq:appendix-frame-connection}
 \nabla_{X_a}X_b=\Gamma_{ab}^{c}X_c-\frac\eps2\Omega_{ab}U,
 \;\text{and}\;
 \nabla_{X_a}U=\nabla_U X_a=\frac\eps2\Omega_{ac}X_c,
 \;\text{while}\;
 \nabla_U U=0.
\end{equation}
The coefficients $\Gamma_{ab}^{c}$ and $\Omega_{ab}$, together with all
derivatives in horizontal directions needed below, are bounded on a finite
collection of such charts.  They are invariant under the principal $S^1$-action.

\begin{lemma}                                                                       \label{lem:tensor-derivatives}
Let $Z_1,\ldots,Z_m,A,B$ be chosen from the invariant frame above, where
$m\leq3$.  Suppose exactly $r$ of the vectors $Z_1,\ldots,Z_m$ and exactly
$q$ of the vectors $A,B$ are equal to $U$.  Then
\begin{equation}                                                                    \label{eq:tensor-derivative-bound}
 \bigl| (\nabla^m\mathcal Q_t)(Z_1,\ldots,Z_m;A,B)\bigr|
 \leq C_m\eps^{q-r}.
\end{equation}
The same estimate holds for $h=\operatorname{Re}\mathcal Q_t$.
\end{lemma}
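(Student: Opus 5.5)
We prove \eqref{eq:tensor-derivative-bound} by induction on $m$. The bookkeeping rule is that every $U$ in a tensor slot contributes a factor $\eps$, and every $U$ in a differentiation slot costs a factor $\eps^{-1}$.

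For $m=0$, horizontality gives $\mathcal Q_t(U,\cdot)=0$. So the only nonzero components have $q=0$, and they equal $\mathcal Q_t(X_a,X_b)$. These are smooth functions on $P$ that do not depend on $\eps$: the tensor is defined on the fixed bundle, and the horizontal lifts $X_a$ do not involve $\eps$. They are therefore bounded on the finitely many charts. The components with $q\ge1$ vanish, so the bound $C_0\eps^{q}$ holds trivially.

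For the inductive step, write
\[
(\nabla^{m}\mathcal Q_t)(Z_1,\ldots,Z_m;A,B)=Z_1\bigl((\nabla^{m-1}\mathcal Q_t)(Z_2,\ldots;A,B)\bigr)-\sum(\nabla^{m-1}\mathcal Q_t)(\ldots,\nabla_{Z_1}W,\ldots),
\]
where the sum runs over the slots $W\in\{Z_2,\ldots,Z_m,A,B\}$. We treat the two kinds of terms separately.

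\emph{The derivative term.} The component functions of $\nabla^{m-1}\mathcal Q_t$ in the invariant frame are polynomials in $\eps$ whose coefficients come from $\Gamma$, $\Omega$, $\mathcal Q_t(X_a,X_b)$ and their horizontal derivatives. More precisely, each is a finite sum of $\eps$-independent smooth functions times powers of $\eps$, and this structure is preserved by the induction.
\begin{itemize}
\item Differentiating by a horizontal $X_a$ keeps the same power of $\eps$ and a bounded coefficient.
\item Differentiating by $U=\eps^{-1}V$ produces $\eps^{-1}$ times a $V$-derivative. Since $\Gamma$ and $\Omega$ are $S^1$-invariant and $\Lie_V\mathcal Q_t=2i\mathcal Q_t$, this $V$-derivative is $2i$ times the $\mathcal Q_t$-part of the component, or zero. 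In particular it stays bounded.
\end{itemize}
This matches the required loss of $\eps^{-1}$ per vertical differentiation slot. To make the argument rigorous, I would strengthen the induction hypothesis to the statement that each component equals $\eps^{q-r}$ times a finite sum of terms of the form (smooth $\eps$-independent invariant function)$\cdot\eps^{2k}$, $k\ge0$, with the $\mathcal Q_t$-dependence linear. Linearity is what lets $V$ act as multiplication by $2i$.

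\emph{The connection terms.} By \eqref{eq:appendix-frame-connection}:
\begin{itemize}
\item $\nabla_{X_a}X_b$ has a horizontal part with bounded coefficients and a $U$-part of size $\eps$. Replacing a horizontal slot by $U$ changes $q$ or $r$ by one. In a tensor slot the gain is $\eps\cdot\eps=\eps^2\cdot\eps^{0}$; in a differentiation slot it is $\eps\cdot\eps^{-1}$, which is bounded. Both are consistent with the count.
\item $\nabla_{X_a}U=\nabla_UX_a=\tfrac\eps2\Omega_{ac}X_c$ replaces a $U$ by a horizontal vector at a cost of $\eps$. A $U$ in a tensor slot, worth $\eps^{+1}$, becomes horizontal, worth $\eps^{0}$, so the factor $\eps$ exactly compensates. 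A $U$ in a differentiation slot, worth $\eps^{-1}$, becomes horizontal with an extra $\eps$, giving a net $\eps^{+2}$ relative to the target. When $Z_1=U$ there is also the prefactor $\eps^{-1}$ from $Z_1$, which is already counted in $r$.
\item $\nabla_UU=0$.
\end{itemize}
Checking each case against the exponent $q-r$ shows every connection term is $O(\eps^{q-r})$, with at worst equality of powers.

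The main obstacle is the bookkeeping of the $U$-derivative of the component functions. A naive bound would lose $\eps^{-1}$ per vertical derivative without justification. This is handled by $S^1$-invariance of the frame data together with the equivariance $\Lie_V\mathcal Q_t=2i\mathcal Q_t$, using $[U,X_a]=0$ so that $U$ acting on component functions coincides with the Lie derivative. For $h=\operatorname{Re}\mathcal Q_t$ the bounds follow by taking real parts, since $\nabla$ is real. Finally, $m\le3$ keeps the number of cases finite, so the constants $C_m$ are uniform in $\eps<\eps_0(t)$.
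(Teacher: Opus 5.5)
Your proposal is correct and follows essentially the paper's argument. Both proofs use induction on $m$ via the component formula, and both use the weight-two equivariance, so that a $U$-derivative of a component in the invariant frame is multiplication by $2i/\eps$. Both then do the same case-by-case bookkeeping of the connection coefficients from \eqref{eq:appendix-frame-connection}.

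The only difference is presentational. You control horizontal derivatives through a strengthened structural hypothesis: each component is $\eps^{q-r}$ times a polynomial in $\eps^2$ whose $\eps$-independent coefficients are linear in the $\mathcal Q_t$-data. Those coefficients are weight-two equivariant rather than invariant, as your own linearity remark implicitly uses. The paper instead obtains the equivariance of $\nabla^m\mathcal Q_t$ directly from the isometric circle action and folds the horizontal derivatives of component functions into the induction.
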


\begin{proof}
The semicolon in the displayed component separates the $m$ covariant
derivative slots from the two tensor slots.
The exponent $q-r$ in \eqref{eq:tensor-derivative-bound} records separately the
occurrences of $U$ among $Z_1,\ldots,Z_m$ and among $A,B$.
At order zero, $\mathcal Q_t(X_a,X_b)=O(1)$ and
$\mathcal Q_t(U,\cdot)=0$, so the assertion holds for $q=0,1,2$.
Since the $S^1$-action preserves $G_{t,\eps}$ and its Levi-Civita
connection, every $\nabla^m\mathcal Q_t$ satisfies the same equivariance
relation as $\mathcal Q_t$.  In the invariant frame, therefore,
differentiating a component in the $U$ direction is multiplication by
$2i/\eps$.  Directional derivatives of the component coefficients in the
$X_a$ directions are uniformly bounded.

The components of the covariant derivatives satisfy
\[
 \begin{aligned}
 (\nabla^{m+1}\mathcal Q_t)(Z_0,Z_1,\ldots;A,B)
 ={}&Z_0\bigl[(\nabla^m\mathcal Q_t)(Z_1,\ldots;A,B)\bigr]\\
 &\quad -\sum_j(\nabla^m\mathcal Q_t)
   (Z_1,\ldots,\nabla_{Z_0}Z_j,\ldots;A,B)\\
 &\quad -(\nabla^m\mathcal Q_t)(Z_1,\ldots;\nabla_{Z_0}A,B)\\
 &\quad -(\nabla^m\mathcal Q_t)(Z_1,\ldots;A,\nabla_{Z_0}B).
 \end{aligned}
\]
The induction also includes ordinary derivatives of the component functions
in the $X_a$ directions; they obey the same bound.  Differentiation
in the $U$ direction
increases $r$ by one and contributes $\eps^{-1}$, whereas differentiation in
an $X_a$ direction changes neither exponent.  By
\eqref{eq:appendix-frame-connection}, a connection term that replaces a
horizontal vector among $Z_1,\ldots,Z_m$ by $U$ has coefficient $O(\eps)$;
the resulting component has size $O(\eps^{q-r-1})$, so the product is
$O(\eps^{q-r})$.  Replacing an occurrence of $U$ among
$Z_1,\ldots,Z_m$ by a horizontal vector gives $O(\eps^{q-r+2})$.  Among
the last two arguments $A,B$, replacing a horizontal vector by $U$ gives
$O(\eps^{q-r+2})$, whereas replacing $U$ by a horizontal vector gives
$O(\eps^{q-r})$.  Connection coefficients involving only horizontal vectors
preserve the exponent.  This proves the inductive estimate and accounts for every
occurrence of $U$.

We verify explicitly the three orders of differentiation used below.  At first order, a
component with only horizontal arguments consists of a bounded directional
derivative and bounded connection terms involving horizontal vectors.  Placing
$U$ in one of the last two arguments gives, for
example, $-\mathcal Q_t(\nabla_{X_a}U,X_b)=O(\eps)$.  At second order, the
only term in a component with horizontal arguments that could grow as
$\eps\to0$ comes from the vertical part of $\nabla_{X_a}X_b$ and is
$O(\eps)\nabla_U\mathcal Q_t=O(1)$.  At third order, the new terms that could
grow as $\eps\to0$ have the forms
$O(\eps)\nabla_U\nabla_X\mathcal Q_t$,
$O(\eps)\nabla_X\nabla_U\mathcal Q_t$, and
$O(\eps^2)\nabla_U^2\mathcal Q_t$; each is $O(1)$ by the exponent just
proved.  Horizontal derivatives of the coefficients in
\eqref{eq:appendix-frame-connection} are bounded, and their vertical
derivatives vanish.  This accounts for all terms arising from the connection
through order three and proves the lemma.
\end{proof}

In particular, Lemma~\ref{lem:tensor-derivatives} shows that every component of
$\nabla^m h$ evaluated only on horizontal vectors is bounded by $C_m$ for
$0\leq m\leq3$.

\begin{proof}[Proof of Lemma~\ref{lem:first-variation-estimate}]
We first record the formula for the linearization of the curvature tensor with
the convention used in this paper.  An overdot in this proof denotes
$\left.\partial_s\right|_{s=0}$.  Let
$\dot{\nabla}=\left.\partial_s\nabla^{G_s}\right|_{s=0}$.  Then
\[
 2G_{t,\eps}(\dot{\nabla}_X Y,Z)
 =(\nabla_X h)(Y,Z)+(\nabla_Y h)(X,Z)-(\nabla_Z h)(X,Y),
\]
and, writing $\dot\Rm=\left.\partial_s\Rm_{G_s}\right|_{s=0}$,
\begin{equation}                                                                    \label{eq:linearized-curvature}
 \dot\Rm(X,Y,Z,W)
 =h(R^{G_{t,\eps}}(X,Y)Z,W)
  +\left\langle
    (\nabla_X\dot{\nabla})(Y,Z)-(\nabla_Y\dot{\nabla})(X,Z),W
   \right\rangle.
\end{equation}
Equivalently, if
$\nabla^2_{X,Y}h=\nabla_X(\nabla_Y h)-\nabla_{\nabla_X Y}h$, then
\[
 \begin{aligned}
 2\dot\Rm(X,Y,Z,W)={}&2h(R^{G_{t,\eps}}(X,Y)Z,W) +(\nabla^2_{X,Y}h-\nabla^2_{Y,X}h)(Z,W)
 +(\nabla^2_{X,Z}h)(Y,W)\\
 &\quad -(\nabla^2_{Y,Z}h)(X,W) -(\nabla^2_{X,W}h)(Y,Z)+(\nabla^2_{Y,W}h)(X,Z).
 \end{aligned}
\]
Formula~\eqref{eq:linearized-curvature} expresses a component of $\dot\Rm$
with horizontal arguments in terms of at most two covariant derivatives of
$h$.  One horizontal covariant derivative of such a component involves at
most three derivatives of $h$.
The connection formulas in \eqref{eq:appendix-frame-connection} and
compactness show that the components of the background curvature tensor with
horizontal arguments, and their first horizontal derivatives, are uniformly
bounded.  The preceding
estimates for covariant
derivatives with horizontal arguments therefore show that the corresponding
components of $\dot\Rm$, and their
first horizontal covariant derivatives, are uniformly bounded.

We next account for the denominator in the definition of sectional curvature.  For
vectors $A,B$ spanning a fixed plane, put
$N_s(A,B)=\Rm_{G_s}(A,B,B,A)$ and
$D_s(A,B)=G_s(A,A)G_s(B,B)-G_s(A,B)^2$, so that
$K(s,\Pi)=N_s/D_s$.  Fix $z\in\tZ$ based at $p\in P$, and choose the horizontal
$G_{t,\eps}$-orthonormal basis $X,Y$ and extend it to a smooth local choice
of spanning frame.  Choose $b$ so that $z$ lies over $\Pi_b$, and let
$\Sigma_b$ be the lift tangent to $z$ used in
Section~\ref{sec:perturbation}.  We thereby regard $N_s$ and $D_s$
as local functions of the plane.  Then
$N_0(z)=0$ and $D_0(z)=1$.  Since $N_0=K(0,\cdot)D_0$,
$K(0,z)=0$, and $\dd_\Pi K(0,z)=0$, one has
$\dd_\Pi N_0(z)=0$.  The
argument for the first variation in
Section~\ref{sec:perturbation} gives $\dot K(z)=0$, hence $\dot N(z)=0$.
Indeed, differentiating $K=N/D$ first with respect to $s$ gives
$\dot K=\dot N/D_0-N_0\dot D/D_0^2$.  Differentiating this identity in a
normal direction $n$ and evaluating at $z$ leaves only
$(\dd\dot N)_z(n)/D_0(z)$: every other term contains one of $N_0(z)$,
$(\dd_\Pi N_0)_z$, or $\dot N(z)$ and therefore vanishes.  Since $D_0(z)=1$,
$r_\eps(n)=(\dd\dot K)_z(n)=(\dd\dot N)_z(n)$.  In particular, this
calculation is independent of the normalization chosen for a nearby spanning
frame.

We first estimate curvature components with three tangent arguments and one
vertical argument.  The estimate will be used for both summands of the normal
bundle.  Decompose the variation of the second fundamental form from
Section~\ref{sec:perturbation} as
$\dot\II=\dot\II^U U+(\dot\II)^{\mathrm H}$, where
$(\dot\II)^{\mathrm H}$ takes values in
$\nu\Sigma_b\cap\ker\theta_t$.  The identity for $\dot\II^U$ and the uniform bound for
$(\dot\II)^{\mathrm H}$ from Section~\ref{sec:perturbation},
together with
Proposition~\ref{prop:tensor-on-zero-curvature-planes}, give
$\nabla^{\Sigma_b}\dot\II^U=0$ and $|(\dot\II)^{\mathrm H}|\leq C$.
Differentiate the Codazzi equation of the fixed surface
$\Sigma_b$ at $s=0$.  Since $\II_0=0$, all terms arising from
differentiating the normal projection, the induced connections, or the
tangent arguments are proportional to $\II_0$ and vanish.  The field $U$ remains unit
and normal for every $G_s$, since $h(U,\cdot)=0$.  The term arising from the
variation of the metric pairing is
$h(R^{G_{t,\eps}}(X,Y)Z,U)=0$.  Hence, for tangent fields $X,Y,Z$,
\[
 \dot\Rm(X,Y,Z,U)
 =\left\langle
  (\nabla_X^\perp\dot\II)(Y,Z)
  -(\nabla_Y^\perp\dot\II)(X,Z),U
 \right\rangle.
\]
Taking the $U$-component of the normal covariant derivative gives
\[
 \left\langle(\nabla_X^\perp\dot\II)(Y,Z),U\right\rangle
 =(\nabla_X^{\Sigma_b}\dot\II^U)(Y,Z)
  -\langle(\dot\II)^{\mathrm H}(Y,Z),\nabla_X^\perp U\rangle.
\]
The first term vanishes because $\dot\II^U$ is parallel.  By
\eqref{eq:appendix-frame-connection}, the second has absolute value at most
$C\eps$.  Therefore
\[
 |\dot\Rm(X,Y,Z,U)|\leq C\eps.
\]

By construction, a vector $n\in\nu^{\mathrm h}_z(\tZ)$ is represented by a curve of
horizontal planes lifted from $\Gr_2(TB)$.  Let $\dot p\in T_pP$ be the
velocity of the corresponding curve of footpoints in $P$; our tubular coordinates
make $\dot p$ horizontal.  Differentiating the numerator in the direction
$n$ gives the four terms obtained by replacing in turn one argument of
$\dot\Rm(X,Y,Y,X)$ by the corresponding horizontal frame variation.  It
also gives $(\nabla_{\dot p}\dot\Rm)(X,Y,Y,X)$ when the footpoint varies.
The horizontal parts of all these terms
are bounded by the curvature estimate for horizontal arguments above.  Because
the spanning frame remains horizontal along the curve,
\eqref{eq:appendix-frame-connection} shows that the vertical part of each
frame variation is $O(1)V=O(\eps)U$.  The estimate
$|\dot\Rm(X,Y,Z,U)|\leq C\eps$ therefore bounds all these terms.
The linear maps from the chosen normal coordinates to these frame variations
are uniformly bounded on the compact set $\tZ$, so the identity $K=N/D$ gives
$|r_{\eps,\mathrm h}|\leq C$.

Finally, consider the vertical graph coordinates from
Section~\ref{sec:connection}:
$\Span\{X+\vartheta_1V,Y+\vartheta_2V\}$, where $V=\eps U$.
These variations fix the base point, so only the arguments of $\dot\Rm$
vary.  Since $r_\eps=(\dd\dot N)_z$, differentiation at
$\vartheta=0$ gives
\[
 \begin{aligned}
 r_{\eps,\mathrm v}(\partial_{\vartheta_1})
 &=\dot\Rm(V,Y,Y,X)+\dot\Rm(X,Y,Y,V)
   =2\eps\dot\Rm(X,Y,Y,U),\\
 r_{\eps,\mathrm v}(\partial_{\vartheta_2})
 &=\dot\Rm(X,V,Y,X)+\dot\Rm(X,Y,V,X)
   =2\eps\dot\Rm(X,Y,U,X).
 \end{aligned}
\]
Here we used the algebraic curvature symmetries, which are preserved under
differentiation with respect to $s$.  In particular,
$\dot\Rm(X,Y,U,X)=-\dot\Rm(X,Y,X,U)$, so
$|\dot\Rm(X,Y,Z,U)|\leq C\eps$ bounds both displayed components by
$C\eps^2$.

Because $X,Y$ are horizontal, their $G_{t,\eps}$-orthonormality implies that
they are also orthonormal for $G_{t,1}$; moreover, $V$ is unit for $G_{t,1}$.
Under the identification
$\nu^{\mathrm v}_z(\tZ)=\operatorname{Hom}(\widetilde\Pi,\R V_p)$,
$\partial_{\vartheta_1}$ and $\partial_{\vartheta_2}$ correspond to the maps
that send, respectively, $X$ and $Y$ to $V$ and vanish on the other basis
vector.  They are therefore orthonormal for the fixed norm on
$\nu^{\mathrm v}_z(\tZ)$.  The two component estimates now give
$|r_{\eps,\mathrm v}|\leq C\eps^2$, completing the proof.
\end{proof}

\section{Proof of Lemma~\ref{lem:tubular-minimization}}                             \label{app:tubular-minimization}

We give the compactness argument for the minimization asserted in
Lemma~\ref{lem:tubular-minimization}.

\begin{proof}
Compactness gives $m:=\min_{z\in Z}\lambda_{\min}(H_z)>0$.
Shrink the neighborhood of the zero section and the interval in the
$s$-variable so that
$\dd_n^2F\geq m/2$.  The implicit function theorem, applied fiberwise and
uniformly over $Z$, produces a unique smooth section $n_*(s,z)$ satisfying
$\dd_n F(s,z,n_*(s,z))=0$, $n_*(0,z)=0$, and
$\partial_s n_*(0,z)=-H_z^{-1}r_z$.  Set
$\varphi(s,z)=F(s,z,n_*(s,z))$.  Differentiating gives
$\varphi(0,z)=\partial_s\varphi(0,z)=0$ and
$\tfrac12\partial_s^2\varphi(0,z)=
a_z-\tfrac12\langle H_z^{-1}r_z,r_z\rangle$.
Uniform Taylor expansion and the strict inequality in
Lemma~\ref{lem:tubular-minimization} yield
$\varphi(s,z)\geq\tfrac\gamma2s^2$ for small nonzero $s$.  Strong convexity
on each fiber then gives
$F(s,z,n)\geq\varphi(s,z)+\tfrac m4|n-n_*(s,z)|^2$,
which proves the claim.
\end{proof}

%% file: sections/topology.tex
\section{Topology of the circle bundle}                                            \label{sec:topology}

We complete the argument by identifying the circle bundle used in the metric
construction.  In the quotient model of
Section~\ref{sec:perturbation-tensor}, the character
$\chi:T^2\to S^1$, $\chi(z_1,z_2)=z_1z_2$, has as its kernel the circle
subgroup used to define $P$ and therefore induces an isomorphism
$T^2/\ker\chi\cong S^1$.  Since the two Hopf fibrations have first
Chern classes $(1,0)$ and $(0,1)$, respectively, the resulting principal
$S^1$-bundle has $c_1(P)=(1,1)$.
It is therefore the bundle used in Section~\ref{sec:connection}.

\begin{lemma}                                                                       \label{lem:topology}
The total space of the principal $S^1$-bundle of Chern class $(1,1)$ over
$S^2\times S^2$ is diffeomorphic to $S^2\times S^3$.
\end{lemma}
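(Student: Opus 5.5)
The total space is the quotient $P=(S^3\times S^3)/S^1$ with $e^{i\lambda}\cdot(u,v)=(e^{i\lambda}u,e^{-i\lambda}v)$, as recorded above. I would identify it concretely with $S^3\times S^2$ by a free-action argument rather than by invoking classification results for simply connected $5$-manifolds.

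\textbf{Step 1 (untwisting).} View $S^3=\mathrm{SU}(2)$. Since $S^3$ is a Lie group, its tangent bundle is trivial, and the circle action on the second factor can be untwisted using the group structure. Concretely, I would define
\[
 \Phi:S^3\times S^3\to S^3\times S^3,\qquad \Phi(u,v)=(u,\,\Lambda(u)v),
\]
where $\Lambda(u)$ is chosen so that $\Phi$ intertwines the given action with the action $e^{i\lambda}\cdot(u,w)=(e^{i\lambda}u,w)$. Then $\Phi$ descends to a diffeomorphism of $P$ onto $(S^3/S^1)\times S^3=S^2\times S^3$.

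A direct choice of this form may not exist. Here is why: left multiplication by the unit quaternion $u$ commutes with right multiplication by $e^{-i\lambda}$, not with left multiplication.

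\textbf{Step 2 (fixing the action).} To get around this, I would write the second factor's action as a right multiplication $v\mapsto v e^{-i\lambda}$, which is harmless since both define Hopf-type actions. I would then take
\[
 \Phi(u,v)=(u,\,v\,\hat u),
\]
where $\hat u$ is a unit quaternion built from $u$ that transforms as $\hat u\mapsto e^{i\lambda}\hat u$ under $u\mapsto ue^{i\lambda}$. The natural candidate is $\hat u=u$ itself, with quaternion multiplication arranged suitably. With this choice
\[
 (ue^{i\lambda},\,ve^{-i\lambda})\mapsto(ue^{i\lambda},\,v\,u\,e^{i\lambda}\cdots).
\]
This requires the phases to cancel through the non-commutative product, so I would adjust the choice, for instance to $w=v\bar u^{-1}$-type expressions. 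The goal is that $w$ becomes invariant while $u$ carries the full action.

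\textbf{Step 3 (fallback).} If that algebra fails, I would argue abstractly. $P$ is a simply connected closed spin $5$-manifold: $\pi_1$ vanishes by the homotopy sequence because $(1,1)$ is primitive, and $w_2=0$ by a computation with the Gysin sequence. Its homology is $H_2=\Z$ with no torsion. Smale's classification then gives $P\cong S^2\times S^3$.

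\textbf{Main obstacle.} The hard part is making the explicit untwisting map equivariant. The fallback depends on correctly computing $w_2(P)=\pi^*w_2(B)$ modulo the Chern class, and this vanishes: $w_2(B)=0$, while $\pi^*$ kills $c_1$ mod $2$ anyway.
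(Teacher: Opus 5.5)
Your Step~3 is the paper's argument: $\pi_1(P)=0$ from the homotopy sequence because $(1,1)$ is primitive, $w_2(P)=\pi^*w_2(S^2\times S^2)=0$, and Smale's classification of simply connected spin $5$-manifolds. Two small corrections. First, $w_2$ comes from the splitting $TP\cong\pi^*TB\oplus\R$, not from the Gysin sequence, and no ``modulo the Chern class'' correction is needed. Second, you assert $H_2(P)\cong\Z$ without proof. The paper obtains it from the Gysin sequence, $H^3(P)\cong\ker(\smile(1,1))\cong\Z$, together with Poincar\'e duality. You can also read it off your own homotopy sequence: $\pi_2(P)\cong\ker\bigl(\Z^2\to\Z,\ (m,n)\mapsto m+n\bigr)\cong\Z$, and Hurewicz applies since $\pi_1(P)=0$. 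With that line added, the proposal is a complete proof, but only through the fallback.

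Steps~1--2 do not yet give a proof, because you never produce a valid $\hat u$. Neither $w=vu$ nor $w=v\bar u^{-1}$ (the same thing for unit $u$) is invariant. The framework is right, however. The missing idea is to compose quaternionic conjugation, which turns $z$ into $\bar z$ and switches sides, with the inner automorphism by $j$, which turns $\bar z$ back into $z$.

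Concretely, identify $\mathbb C^2$ with $\mathbb H$ by $(u_1,u_2)\mapsto u_1+u_2j$, so that scalar multiplication is left multiplication. Put $\hat u=j\bar u j^{-1}=u_1-\bar u_2 j$. Since $jz=\bar z j$ for $z\in\mathbb C$, one checks $\widehat{zu}=\hat u z$ for $z\in S^1$. Hence $\Phi(u,v)=(u,\hat u v)$ satisfies $\Phi(zu,\bar z v)=(zu,\hat u v)$. So $\Phi$ is a diffeomorphism of $S^3\times S^3$, with inverse $(u,w)\mapsto(u,\hat u^{-1}w)$, that converts the action defining $P$ into $z\cdot(u,w)=(zu,w)$. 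It therefore descends to an explicit diffeomorphism $P\to S^2\times S^3$.

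This completed route is more elementary than the paper's: it avoids the Smale--Barden classification and exhibits the diffeomorphism. The paper's argument needs no formula and applies verbatim to every primitive Chern class.
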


\begin{proof}
The long exact homotopy sequence of
$S^1\to P\to S^2\times S^2$ contains the connecting map
$\pi_2(S^2\times S^2)\cong\Z^2\to\pi_1(S^1)\cong\Z$, given by
$(m,n)\mapsto m+n$.
It is surjective because the first Chern class is primitive.  Hence
$\pi_1(P)=0$.

The Gysin sequence shows that
\[
 H^3(P;\Z)\cong\ker\bigl(H^2(S^2\times S^2;\Z)\xrightarrow{\smile(1,1)}H^4(S^2\times S^2;\Z)\bigr)\cong\Z.
\]
By Poincar\'e duality, $H_2(P;\Z)\cong\Z$, in particular with no torsion.
The vertical tangent line is trivial, so
$TP\cong\pi^*T(S^2\times S^2)\oplus\R$.
Each oriented two-sphere is spin, and therefore
$w_2(P)=\pi^*w_2(S^2\times S^2)=0$.
The Smale--Barden classification of simply connected five-manifolds implies
that a closed
simply connected spin five-manifold with torsion-free second homology is a
connected sum of copies of $S^2\times S^3$, with the number of summands
equal to the rank of $H_2$~\cite{Smale1962,Barden1965}.  The preceding
homology and spin calculations therefore give
$P\cong S^2\times S^3$.
\end{proof}